\documentclass[12pt]{imsart}
\usepackage{amsthm,amsmath}
\usepackage[colorlinks,citecolor=blue,urlcolor=blue]{hyperref}
\usepackage{datetime2}
\usepackage{amsmath,amsfonts,amssymb,amsthm}

\usepackage{enumitem}

\RequirePackage{amsthm,amsmath,amsfonts,amssymb}
\RequirePackage[numbers]{natbib}
\RequirePackage[colorlinks,citecolor=blue,urlcolor=blue]{hyperref}
\RequirePackage{graphicx}
\usepackage{lineno,hyperref}

\startlocaldefs
\modulolinenumbers[5]

\usepackage[utf8]{inputenc}
\usepackage[english]{babel}

\usepackage[margin=1in]{geometry}
\usepackage{bm}
\usepackage{graphicx}
\usepackage{amssymb,amsmath,amsthm,mathrsfs}
\usepackage{epstopdf}
\usepackage{algorithm}
\usepackage{amsfonts,delarray}
\usepackage{algorithm,algcompatible,amsmath}
\usepackage{rotating,color}
\usepackage{subfigure}
\usepackage{multirow}
\usepackage{titlesec,textcase}
\usepackage{longtable}
\usepackage{bbm}
\usepackage{rotating}

\newtheorem{Theorem}{Theorem}[section]
\newtheorem{Lemma}[Theorem]{Lemma}

\newtheorem{Proposition}[Theorem]{Proposition}

\newtheorem{Definition}[Theorem]{Definition}

\theoremstyle{definition}

\RequirePackage[normalem]{ulem} 

\definecolor{rp}{RGB}{83,54,106}

\def\boxit#1{\vbox{\hrule\hbox{\vrule\kern6pt\vbox{\kern6pt#1\kern6pt}\kern6pt\vrule}\hrule}}

\endlocaldefs

\allowdisplaybreaks

\begin{document}
\begin{frontmatter}
\title{Central limit theorem for the global clustering
coefficient of random geometric graphs}

\runtitle{CLT for clustering coefficient of RGGs}
\runauthor{ M. Yuan }

\begin{aug}

\author[B]{\fnms{Mingao} \snm{Yuan}\ead[label=e2]{myuan2@utep.edu
} },\ \ \ 
\author[C]{\fnms{Md. Niamul Islam} \snm{Sium}\ead[label=e3]{msium@miners.utep.edu
}}


\address[B]{Department of Mathematical Sciences,
The University of Texas at El Paso, El Paso, Texas 79968, USA
\printead{e2}}

\address[C]{Department of Mathematical Sciences,
The University of Texas at El Paso, El Paso, Texas 79968, USA
\printead{e3}}

\end{aug}
 
\begin{abstract}
The global clustering coefficient serves as a powerful metric for the structural analysis and comparison of complex networks. Random geometric graphs offer a  realistic  framework for representing the spatial constraints and geometry often found in real-world network datasets. In this paper, we establish a central limit theorem for the global clustering coefficient of random geometric graphs. Our main result identifies the centering and scaling sequences required for convergence in law to the standard normal distribution. Our approach varies by regime: in the dense case, we employ the Lyapunov CLT; in the intermediate case, we utilize the asymptotic theory of 
$U$-statistics with sample-size-dependent kernels; and in the sparse regime, we use the method of moments to derive the asymptotic distribution. Notably, the convergence rates for non-uniform and uniform random geometric graphs diverge in the dense regime, yet they coincide in the sparse regime. In addition,  we find that the global clustering coefficient for both uniform and non-uniform RGGs is asymptotically equal to $3/4$.
\end{abstract}

\begin{keyword}[class=MSC2020]
\kwd[]{62A09}
\kwd[;  ]{62E20}
\end{keyword}

\begin{keyword}
\kwd{global clustering coefficient}
\kwd{random geometric graph}
\kwd{central limit theorem}
\end{keyword}

\end{frontmatter}

\section{Introduction}

Random geometric graphs have emerged as powerful models for networks with spatial or geometric structure. In these models, nodes are typically distributed according to a probability distribution over a metric space, and an edge connects two nodes if the distance between them is below a certain threshold \cite{Penrose2003,Gilbert1961,BB24}. When the node distribution is uniform, the model is referred to as a uniform random geometric graph. Conversely, if the distribution is non-uniform, it is termed a non-uniform random geometric graph.  Unlike classical random graph models such as the Erd\H{o}s-R\'enyi model where edges form independently with a fixed probability \cite{ErdosRenyi1959}, random geometric graphs naturally incorporate spatial constraints and geometric dependencies that arise in many real-world networks \cite{Penrose2003,Barthelemy2011,ZLKG14}. The spatial embedding induces correlations among edges that share common endpoints, leading to structural properties fundamentally different from classical random graphs \cite{DallChristensen2002}. These models find applications in diverse domains including wireless sensor networks \cite{Gupta2000}, biological systems \cite{H08}, public health \cite{BM08, Zheng2025}.

The global clustering coefficient is a foundational measure for quantifying network topology, reflecting the tendency of nodes to form tightly connected clusters \cite{WS98,N03}. Defined as the ratio of three times the number of triangles to the number of 2-paths [5], this metric has become central to network data analysis  \cite{RPW05,N03,LCYC14,SBL13}. For example, \cite{RPW05} used clustering coefficients to differentiate between network structures through extensive simulation studies.

Understanding the properties of the global clustering coefficient within random graph models is essential \cite{LCYC14,DallChristensen2002,Y25c}. One primary research focus involves deriving analytical expressions for this metric. For example, \cite{LCYC14} analytically derived the clustering coefficient for an online social network model, while \cite{DallChristensen2002} established an expression for a random geometric graph model constructed differently from the one presented here. Another significant research direction explores the limiting distribution of the global clustering coefficient \cite{Y25c}. To our knowledge, the clustering coefficient of non-uniform random geometric graphs—where nodes are distributed non-uniformly—remains less well understood.

In this paper, we establish a central limit theorem for the global clustering coefficient of nonuniform random geometric graphs.  We begin by characterizing the leading-order terms for the expected counts of small subgraphs, such as edges, 2-paths, and triangles. These results are foundational to our analysis and of independent interest, as they elucidate how nonuniform vertex distributions influence local subgraph architecture. Building on these characterizations, we derive the exact asymptotic limit and limiting distribution of the clustering coefficient.  Our analysis demonstrates that the global clustering coefficient in the nonuniform random geometric graph converges in probability to $3/4$
, characterizing a limit that, to our knowledge, has not been previously reported. This limit coincides with the uniform case.  We prove that, under appropriate scaling and centering, the coefficient converges in law to the standard normal distribution. Our methodology spans three distinct regimes: the Lyapunov central limit theorem for the dense case, the asymptotic theory of 
$U$-statistics with sample-size-dependent kernels for the intermediate case, and the method of moments for the sparse case.

Despite sharing identical limits and asymptotic distributions, the rates of convergence for nonuniform and uniform random geometric graphs diverge. Specifically, the nonuniform model exhibits a slower convergence rate than its uniform counterpart in the dense regime. However, in the sparse regime, these convergence rates coincide. These findings highlight a critical disparity in the underlying architecture of these models, offering deeper insight into how nonuniformity shapes the global properties of geometric graphs.

The paper is organized as follows. Our main results are presented in Section \ref{mainresultf}, while the proofs are deferred to Section \ref{secpf}.

\medskip

Notations: Throughout this paper, we adopt the Bachmann–Landau notation for asymptotic analysis. Let $c_1,c_2$ be two positive constants. For two positive sequence $a_n$, $b_n$, denote $a_n=\Theta( b_n)$ if $c_1\leq \frac{a_n}{b_n}\leq c_2$; denote $a_n=O(b_n)$ if $\frac{a_n}{b_n}\leq c_2$; $a_n=o(b_n)$ or $b_n=\omega(a_n)$ if $\lim_{n\rightarrow\infty}\frac{a_n}{b_n}=0$. Let $X_n$ be a sequence of random variables. $X_n=O_P(a_n)$ means $\frac{X_n}{a_n}$ is bounded in probability.  $X_n=o_P(a_n)$ means $\frac{X_n}{a_n}$ converges to zero in probability. $X_n\Rightarrow N(0,1)$ represents $X_n$ converges in distribution to the standard normal distribution. The notation $\sum_{i\neq j\neq k\neq l}$ represents summation over indices $i,j,k,l\in\{1,2,3,\dots,n\}$ with $i\neq j,i\neq k, i\neq l, j\neq k, j\neq l, k\neq l$. $I[E]$ is the indicator function of event $E$. $E^c$ represents the complement of event $E$. For a set $B$, $|B|$ denote the number of elements in the set $B$. For a function $g(x)$, $g^{(k)}(x)$ denotes the $k$-th derivative of $g(x)$.  We also use $f'(x)$, $f''(x)$ and $f'''(x)$ denote the first, second, and third derivatives of \(f(x)\), respectively.

\section{Main results}\label{mainresultf}

Let $\mathcal{G}=(\mathcal{V},\mathcal{E})$  be an undirected graph on $n$ vertices, where \(\mathcal{V}=\{1,2,\dots ,n\}\) is the vertex set and  \(\mathcal{E}\subseteq \{\{i,j\}:i,j\in \mathcal{V},i\ne j\}\) is the edge set.   An adjacency matrix $A$ is a square $n\times n$
 matrix used to represent a graph, where \(A_{ij}=1\) if \(\{i,j\}\in \mathcal{E}\), and \(A_{ij}=0\) otherwise. The number of edges  incident to a vertex $i$ is called its degree, denoted by $d_i$. A triangle is a set of three mutually adjacent vertices, while a 2-path is a set of three distinct vertices connected by exactly two edges.

The global clustering coefficient $\mathcal{C}_n$  is defined as the ratio of the number of triangles to the number of 2-paths \cite{WS98,N09}:
\[\mathcal{C}_n=\frac{\sum_{i\neq j\neq k}A_{ij}A_{jk}A_{ki}}{\sum_{i\neq j\neq k}A_{ij}A_{jk}}.\]
It quantifies the transitivity of the graph, representing the tendency of two neighbors of a common vertex to be connected.

A graph is considered random if its edge set is determined according to a specific stochastic process. In the classical Erd\H{o}s-R\'{e}nyi  model, every edge exists independently with a fixed, uniform probability. In contrast, random geometric graphs (RGGs) incorporate spatial constraints: vertices are distributed within a metric space, and an edge exists between two vertices if and only if their distance is less than a prescribed connectivity radius 
 \cite{DC23,GMPS23}. In this work, we focus on the random geometric  graphs defined below  \cite{DC23,GMPS23}.

\begin{Definition}\label{def1}
Let $r_n\in[0,0.5]$ be a real number and $f(x)$ be a probability density function on $[0,1]$. Given independent random variables $X_1,X_2,\dots,X_n$ distributed according to $f(x)$, the Random Geometric Graph (RGG) $\mathcal{G}_{n}(f,r_n)$ is defined as
\[A_{ij}=I[d(X_i,X_j)\leq r_n],\]
 where $A_{ii}=0$ and  $d(X_i,X_j)=\min\{|X_{i}-X_{j}|,1-|X_{i}-X_{j}|\}$.
\end{Definition}

The model $\mathcal{G}_{n}(f,r_n)$ is a one-dimensional random geometric graph, where the vertices are
 independently sampled from the density function 
$f(x)$ on the unit interval $[0,1]$. An edge exists between any two vertices within distance $r_n$ \cite{HM09,HM12,BC23}. This framework effectively captures the spatial geometry and structural dependencies characteristic of real-world networks. When  $f(x)$ is the uniform density, 
$\mathcal{G}_{n}(f,r_n)$
 corresponds to the \textit{uniform random geometric graph}. Conversely, the model is termed a \textit{non-uniform random geometric graph} when $f(x)$
 is non-constant. To date, research has focused extensively on the uniform case, largely due to its analytical tractability \cite{HRP08, GRK05, GMPS23, Z15, YF25, Y25c, Y23b}. However, non-uniform RGGs have recently gained prominence for their superior capacity to model the inhomogeneities inherent in empirical networks \cite{PBGKL23, G21, Y25}.

We now present a pivotal proposition that is fundamental to the study of the asymptotic properties of the global clustering coefficient for non-uniform random geometric graphs.

\begin{Proposition}\label{propmain}
Let $A$ be sampled from $\mathcal{G}_{n}(f,r_n)$. Suppose $f(x)=g(x)I[0\leq x\leq 1]$, where $g(x)$ is a  periodic function with period one that is bounded away from zero and has a bounded fourth derivative. In addition, we assume  
 $r_n=o(1)$. Let $\mu_n=\frac{\mathbb{E}[A_{12}A_{13}A_{23}]}{\mathbb{E}[A_{12}A_{13}]}$.  Then we have
\begin{align}\label{prope0}
    \mathbb{E}[A_{12}|X_1]&=2r_nf(X_1) + \frac{f''(X_1)}{3} r_n^3 + O(r_n^5),\\     \label{prope1}
   \mathbb{E}[A_{12}A_{13}|X_1]& = 4r_n^2f^2(X_1) + \frac{4r_n^4 }{3} f(X_1) f''(X_1) + O(r_n^6),\\
   \label{prope01}
   \mathbb{E}[A_{12}A_{23}|X_1]&=4r_n^2f^2(X_1) +\frac{r_n^4}{3}\big[4(f^{\prime}(X_1))^2+6f(X_1)f^{\prime\prime}(X_1)\big]+ O(r_n^6),\\
\label{prope3}
    \mathbb{E}[A_{12}A_{13}A_{23}|X_1]&=3r_n^2f^2(X_1)+ \frac{5 r_n^4}{12} \left[(f'(X_1))^2 + 2f(X_1) f''(X_1)\right] + O(r_n^5),\\
\label{prope4}
\mathbb{E}[A_{12}A_{13}]&= 4 r_n^2\mathbb{E}[f^2(X_1)]  + \frac{4r_n^4 }{3} \mathbb{E}[f(X_1)f''(X_1)] + O(r_n^5),\\
\label{prope2}
    \mathbb{E}[A_{12}A_{13}A_{23}]&=3r_n^2\mathbb{E}[f^2(X_1)]+ \frac{5r_n^4}{12} \mathbb{E}[(f'(X_1))^2+2f(X_1)f''(X_1)] + O(r_n^6),\\ \label{defafbf1}
    \mu_n&=\frac{3+r_n^2a_f}{4+r_n^2b_f}+O(r_n^3),
\end{align}
Here, the remainder terms $O(r_n^3)$, $O(r_n^5)$ and $O(r_n^6)$ do not depend on $X_1$,  and 
\begin{align}\label{defafbf}
a_f=\frac{5 \mathbb{E}[(f'(X_1))^2+2f(X_1)f''(X_1)]}{12\mathbb{E}[f^2(X_1)]},\hskip 1cm b_f=\frac{4 \mathbb{E}[f(X_1)f''(X_1)]}{3\mathbb{E}[f^2(X_1)]}.
\end{align}
\end{Proposition}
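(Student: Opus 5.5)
The plan is to extend $f$ periodically to the whole line; by hypothesis it then coincides with $g$. The torus distance then lets us write every conditional expectation as an integral over a window centred at $X_1$, with no boundary effects. Since $r_n\le 1/2$ and $r_n=o(1)$, for $x\in[0,1]$ we have
\[
\mathbb{E}[A_{12}\mid X_1=x]=\int_{-r_n}^{r_n} g(x+u)\,du .
\]
Taylor expanding $g(x+u)$ to fourth order, with the remainder controlled by $\sup|g^{(4)}|$, the odd terms cancel. This gives $2r_nf(x)+\frac{r_n^3}{3}f''(x)+O(r_n^5)$, uniformly in $x$, which is \eqref{prope0}. Squaring it, using conditional independence of $X_2,X_3$ given $X_1$, gives \eqref{prope1} directly: $(2r_nf+\frac{r_n^3}{3}f'')^2=4r_n^2f^2+\frac43 r_n^4ff''+O(r_n^6)$, where boundedness of $f,f''$ makes the remainder uniform.

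For \eqref{prope01} I would write
\[
\mathbb{E}[A_{12}A_{23}\mid X_1=x]=\int_{-r_n}^{r_n} g(x+u)\,h(x+u)\,du,
\qquad h(y):=\mathbb{E}[A_{23}\mid X_2=y].
\]
By \eqref{prope0}, $h(y)=2r_ng(y)+\frac{r_n^3}{3}g''(y)+O(r_n^5)$. The integrand is then $2r_ng^2(x+u)+\frac{r_n^3}{3}g(x+u)g''(x+u)+O(r_n^5)$. Expanding $g^2$ to second order in $u$ gives $(g^2)''=2g'^2+2gg''$, and integrating over $[-r_n,r_n]$ yields
\[
4r_n^2f^2+\tfrac{2r_n^4}{3}\bigl(2f'^2+2ff''\bigr)+\tfrac{2r_n^4}{3}ff''+O(r_n^6).
\]
This matches $\frac{r_n^4}{3}[4f'^2+6ff'']$. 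The fourth-order Taylor term of $g^2$ integrates to $O(r_n^5)\cdot r_n$, so this step needs only bounded derivatives of $g$ up to order four.

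The triangle term \eqref{prope3} is the main computational step. I would substitute $X_2=x+u$ and $X_3=x+v$; for small $r_n$ the torus distances are the ordinary ones. Then
\[
\mathbb{E}[A_{12}A_{13}A_{23}\mid X_1=x]=\iint_{D} g(x+u)\,g(x+v)\,du\,dv,
\qquad D=\{|u|\le r_n,\ |v|\le r_n,\ |u-v|\le r_n\}.
\]
The region $D$ is a hexagon of area $3r_n^2$, symmetric under $(u,v)\mapsto(-u,-v)$ and under swapping $u$ and $v$. Expanding $g(x+u)g(x+v)$ to second order, the linear terms vanish by the central symmetry of $D$. The quadratic part is $\frac{f f''}{2}(u^2+v^2)+f'^2uv$. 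I would compute the two moments
\[
\iint_D u^2\,du\,dv=\tfrac{5}{6}r_n^4,\qquad \iint_D uv\,du\,dv=\tfrac{5}{12}r_n^4
\]
by splitting the hexagon into the square $[0,r_n]^2$, the square $[-r_n,0]^2$, and the two triangles in the mixed-sign quadrants. This gives $\frac{5r_n^4}{12}(f'^2+2ff'')$. Checking these moments is the step most prone to arithmetic error, and I would verify them carefully.

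The cubic term in the expansion is odd and integrates to zero. The remainder is $O(r_n^4)$ times the area, i.e.\ $O(r_n^6)$, comfortably inside the stated $O(r_n^5)$ (the weaker claim is harmless). The unconditional statements \eqref{prope4} and \eqref{prope2} follow by taking expectations over $X_1$, since the remainders are uniform in $X_1$. For \eqref{prope2}, the $O(r_n^6)$ remainder comes from the sharper computation just described.

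Finally, $\mu_n$ is the ratio of \eqref{prope2} to \eqref{prope4}. Dividing numerator and denominator by $r_n^2\mathbb{E}[f^2(X_1)]$, which is bounded away from zero, gives
\[
\mu_n=\frac{3+r_n^2a_f+O(r_n^4)}{4+r_n^2b_f+O(r_n^3)}=\frac{3+r_n^2a_f}{4+r_n^2b_f}+O(r_n^3),
\]
because the denominator is bounded away from zero for small $r_n$. This is \eqref{defafbf1} with $a_f,b_f$ as in \eqref{defafbf}.
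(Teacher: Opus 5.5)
Your proof is correct. For \eqref{prope0}, \eqref{prope1}, \eqref{prope01} and the ratio \eqref{defafbf1} it follows the paper's argument: periodic extension to remove boundary effects, a symmetric Taylor expansion over a window of half-width $r_n$ around $X_1$, and conditioning on $X_2$ for the 2-path.

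The triangle term \eqref{prope3} is where your route genuinely differs. The paper proceeds as follows:
\begin{itemize}
\item it splits according to whether $X_1$ lies within $r_n$ of an endpoint, and handles the boundary cases by a change of variables;
\item in the interior, it splits on whether $X_2$ lies to the right or left of $X_1$;
\item it evaluates the nested integrals through the antiderivative $F$ and several one-variable Taylor expansions;
\item it recovers the left half from the right half by $r_n\mapsto -r_n$.
\end{itemize}
You instead pass to torus coordinates once and write the conditional probability as $\iint_D g(x+u)g(x+v)\,du\,dv$ over the centrally symmetric hexagon $D$. Everything then reduces to its area $3r_n^2$ and the two moments $\iint_D u^2\,du\,dv=\frac56 r_n^4$ and $\iint_D uv\,du\,dv=\frac5{12}r_n^4$, both of which are correct.

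Besides being shorter and less error-prone, your approach makes the cubic terms vanish exactly by symmetry. So with only a bounded fourth derivative you obtain an $O(r_n^6)$ remainder in \eqref{prope3}. That sharper remainder is what actually justifies the $O(r_n^6)$ stated in \eqref{prope2}. The paper proves \eqref{prope3} only up to $O(r_n^5)$ and then asserts that \eqref{prope2} follows directly, which as written gives only $O(r_n^5)$. This is harmless downstream, since only an $O(r_n^5)$ remainder is used when computing $\mu_n$.
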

Proposition \ref{propmain} plays a crucial  role in our analysis  and is of independent interest for understanding the local subgraph structures of non-uniform random geometric graphs. Specifically, it characterizes the leading-order terms for the expected counts of small subgraphs—including edges, 2-paths, and triangles—thereby illuminating how non-uniform vertex distributions affect these counts. These results are also essential for deriving both the asymptotic limit and the limiting distribution of the clustering coefficient. For instance, we demonstrate in the sequel that the clustering coefficient is asymptotically equal to $\mu_n$, which  converges to $3/4$.

Before moving forward, we examine the assumptions of Proposition \ref{propmain} that underlie our primary findings.  The condition  $r_n=o(1)$ ensures that the resulting graph is relatively sparse. This assumption is consistent with empirical observations, as most real-world networks exhibit sparsity \cite{A17}. The condition \(f(x)=g(x)I[0\le x\le 1]\), where \(g\) satisfies \(g(x+1)=g(x)=g(x-1)\) for all \(x\in \mathbb{R}\), effectively defines   \(f(x)\) as a  density function on a circle with circumference 1.  This assumption is crucial for the analytical tractability of the leading terms in (\ref{prope0})--(\ref{defafbf1}) of Proposition \ref{propmain}. Without this periodic structure, the expressions for these terms would become prohibitively complex due to boundary effects. The assumption that \(g(x)\) possesses  a bounded fourth derivative is a technical requirement. This condition allows for a Taylor expansion that ensures the remainder terms  of  (\ref{prope0})--(\ref{defafbf1}) are $O(r_n^3)$ or \(O(r_{n}^{5})\) or \(O(r_{n}^{6})\). Crucially, these bounds hold uniformly in $X_1$
, ensuring the remainders are independent of the specific location of the vertex.

The assumptions imposed on the density \(f(x)\) in Proposition \ref{propmain} are relatively mild and easily satisfied in practice. Notably, they are fulfilled by the uniform distribution as well as the ubiquitous von Mises density. A more comprehensive analysis of these conditions is provided later. Equipped with Proposition \ref{propmain}, we now present a central limit theorem for the global clustering coefficient.

\begin{Theorem}\label{mthm}
Suppose the assumption of Proposition \ref{propmain} holds. Let $\mu_n=\frac{\mathbb{E}[A_{12}A_{13}A_{23}]}{\mathbb{E}[A_{12}A_{13}]}$. Then the following results hold. 
\vskip 2mm
(I). If $nr_n^5=\omega(1)$, then
\begin{equation}\label{thmexp2}
    \frac{16\sqrt{n}\mathbb{E}[f^2(X_1)]}{3r_n^2\sigma_1}\big(\mathcal{C}_n-\mu_n\big)\Rightarrow N(0,1),
\end{equation}
provided that $\sigma_1^2=\mathbb{E}\Big[\Big(-3c_ff^2(X_1)-2f(x)f^{\prime\prime}(X_1)-(f^{\prime}(X_1))^2\Big)^2\Big]>0$, where $c_f=\frac{\mathbb{E}[(f'(X_1))^2]}{\mathbb{E}[f^2(X_1)]}$. 

\vskip 2mm
(II). Suppose $nr_n=\omega(1)$. If $nr_n^5=o(1)$ or $f(x)$ is the uniform density, then
\begin{equation}\label{thmexp4}
    \frac{2\sqrt{2}nr_n^2\mathbb{E}[f^2(X_1)]}{3\sigma_{2n}}\big(\mathcal{C}_n-\mu_n\big)\Rightarrow N(0,1),
\end{equation}
where $\sigma_{2n}^2=\mathbb{E}[h(X_1,X_2,X_3)h(X_1,X_2,X_4)]=\Theta(r_n^3)$ and
\begin{align}\label{deofhf}
    h(X_1, X_2, X_3) = A_{12}A_{13}A_{23} - \frac{\mu_n}{3}(A_{12}A_{13} + A_{12}A_{23} + A_{13}A_{23}).
\end{align}
\vskip 2mm
(III). If $nr_n=o(1)$ and $n^3 r_n^2=\omega(1)$, then
\begin{equation}\label{thmexp6}
    \frac{8n\sqrt{n}r_n\sqrt{\mathbb{E}[f^2(X_1)]}}{3}\big(\mathcal{C}_n-\mu_n\big)\Rightarrow N(0,1).
\end{equation}
\end{Theorem}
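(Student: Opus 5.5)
The plan is to linearize the ratio. Write $T_n=\sum_{i\neq j\neq k}A_{ij}A_{jk}A_{ki}$ and $P_n=\sum_{i\neq j\neq k}A_{ij}A_{jk}$. Then
\[
\mathcal{C}_n-\mu_n=\frac{T_n-\mu_nP_n}{P_n}.
\]
Symmetrizing over the $3!$ orderings, $T_n-\mu_nP_n=6\sum_{i<j<k}h(X_i,X_j,X_k)$, with $h$ as in (\ref{deofhf}). By the definition of $\mu_n$, $\mathbb{E}h=0$, so the numerator is a centered $U$-statistic of order three with an $n$-dependent kernel.

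For the denominator, I would show by a variance computation that $P_n/(n^3\mathbb{E}[A_{12}A_{13}])\to 1$ in probability whenever $n^3r_n^2\to\infty$. By (\ref{prope4}), $n^3\mathbb{E}[A_{12}A_{13}]=4n^3r_n^2\mathbb{E}[f^2](1+o(1))$. By Slutsky, it then suffices to prove a CLT for $U_n=\sum_{i<j<k}h(X_i,X_j,X_k)$ with the right normalization. Checking the constants: $6U_n/(4n^3r_n^2\mathbb{E}f^2)$ times the stated scalings should reproduce the three normalizations.

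I would use the Hoeffding decomposition $U_n=\binom{n}{2}(n-2)\cdot\frac{1}{n}\cdot 3\sum_i h_1(X_i)+(\text{degenerate parts})$, where $h_1(x)=\mathbb{E}[h\mid X_1=x]$. Proposition \ref{propmain} computes $h_1$ directly:
\[
h_1(X_1)=\mathbb{E}[A_{12}A_{13}A_{23}\mid X_1]-\tfrac{\mu_n}{3}\bigl(\mathbb{E}[A_{12}A_{13}\mid X_1]+2\,\mathbb{E}[A_{12}A_{23}\mid X_1]\bigr).
\]
Inserting (\ref{prope1}), (\ref{prope01}), (\ref{prope3}) and (\ref{defafbf1}), the $r_n^2$ terms cancel ($3f^2-\tfrac34\cdot 4f^2=0$). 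The $r_n^4$ term is a centered multiple of $-3c_ff^2-2ff''-(f')^2$, possibly after integration by parts identities such as $\mathbb{E}[ff'']=-\mathbb{E}[(f')^2]$ for periodic $f$. Hence $\operatorname{Var}h_1=\Theta(r_n^8)\sigma_1^2$, and for uniform $f$ it is identically $O(r_n^5)$ or smaller.

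The second projection $h_2$ has variance $\Theta(r_n^3)$ (the quantity $\sigma_{2n}^2$ as stated, which I would verify by a local computation on two nearby points), and $\mathbb{E}h^2=\Theta(r_n^2)$. The contributions to $\operatorname{Var}(U_n)$ are then of order $n^5r_n^8$, $n^4r_n^3$ and $n^3r_n^2$, respectively. Comparing them yields exactly the three regimes:
\begin{itemize}
\item $nr_n^5\to\infty$: the linear term dominates.
\item $nr_n\to\infty$ with $nr_n^5\to 0$, or uniform $f$ (where the linear term vanishes to the relevant order): the quadratic term dominates.
\item $nr_n\to 0$: the full-kernel (triangle/2-path count) term dominates.
\end{itemize}

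\textbf{Regime (I).} The leading term is a sum of i.i.d. variables, and the Lyapunov CLT applies since $h_1/r_n^4$ is bounded. The remainder projections have variance of smaller order by the comparison above.

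\textbf{Regime (II).} The dominant part is the degenerate second-order $U$-statistic $\sum_{i<j}\tilde h_2(X_i,X_j)$. I would apply a martingale CLT for degenerate $U$-statistics with sample-dependent kernels (Hall's theorem). This requires checking
\[
\mathbb{E}G^2+n^{-1}\mathbb{E}\tilde h_2^4=o\bigl((\mathbb{E}\tilde h_2^2)^2\bigr),\qquad G(x,y)=\mathbb{E}[\tilde h_2(X_1,x)\tilde h_2(X_1,y)].
\]
The localization of $\tilde h_2$ to $|x-y|\le 2r_n$ gives $\mathbb{E}G^2=O(r_n^7)$ and $\mathbb{E}\tilde h_2^4=O(r_n)$. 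Both conditions then reduce to $r_n\to0$ and $nr_n\to\infty$.

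\textbf{Regime (III).} The graph is very sparse, so I would use the method of moments on the normalized $U_n/\sqrt{n^3\mathbb{E}h^2}$, where $\mathbb{E}h^2\sim\tfrac{3}{16}\cdot$const$\cdot r_n^2$ after computing $h^2$ in terms of triangle and 2-path indicators. The $k$-th moment expands over $k$-tuples of triples. Tuples whose triples pair up into $k/2$ identical pairs give the Gaussian moments $(k-1)!!$. Any other overlap pattern uses fewer free vertices relative to its $r_n$-cost, and is therefore negligible when $nr_n\to0$ and $n^3r_n^2\to\infty$.

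\textbf{Main obstacle.} I expect this combinatorial bounding of overlap patterns to be the hardest step. For each connected configuration with $v$ vertices and $m$ triples, one needs to bound the expectation by $O(r_n^{v-1})$ (the cluster must be spatially localized), and then show that $n^{v}r_n^{v-1}/(n^3r_n^2)^{m/2}\to0$ whenever the configuration is not a perfect pairing.
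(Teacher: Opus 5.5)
Your plan is sound, and outside regime (II) it is the paper's proof. The shared steps are: the identity $P_n(\mathcal{C}_n-\mu_n)=\sum_{i\neq j\neq k}h(X_i,X_j,X_k)$; a second-moment argument for $P_n/(4n^3r_n^2\mathbb{E}[f^2(X_1)])\to1$; the variance orders $n^5r_n^8$, $n^4r_n^3$, $n^3r_n^2$ selecting the regime; Lyapunov for the linear term in (I); and the method of moments with the spanning-tree bound $O(r_n^{v-1})$ in (III). Your per-component criterion in (III) is actually tighter than the paper's write-up. It also disposes of components made of two distinct overlapping triples (ratio $O(nr_n)$), which the paper skips when it asserts that $t=m$ forces identical pairs.

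The real difference is in (II). The paper never isolates a degenerate kernel. It shows $U_n=3n\sum_{i\neq j}h_2(X_i,X_j)+O_P(\sqrt{n^5r_n^8+n^3r_n^2})$ by a direct $L^2$ computation, then applies the Jammalamadaka--Janson theorem to the non-degenerate $h_2$. That theorem needs only the crude bounds $\sup|h_2|=O(r_n)$ and $\sup_x\mathbb{E}|h_2(x,X_1)|=O(r_n^2)$, i.e. $r_n\to0$ and $n^2r_n\to\infty$; there $nr_n\to\infty$ enters only through the cubic remainder. You instead use orthogonality of the Hoeffding components and Hall's martingale CLT for $\tilde h_2=h_2-h_1(x)-h_1(y)$. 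This costs computing $\mathbb{E}G^2=O(r_n^7)$ and a fourth moment. In return it is an off-the-shelf tool whose two conditions read off transparently as $r_n\to0$ and $nr_n\to\infty$. Both routes work in regime (II).

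Several slips need fixing before the stated constants come out.

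\textbf{Hoeffding coefficient.} The linear coefficient of $\sum_{i<j<k}h$ is $\binom{n-1}{2}$, not $\binom n2(n-2)\cdot\frac3n=3\binom{n-1}{2}$. With the correct coefficient, $6U_n\approx3n^2\sum_ih_1(X_i)$, and $\mathbb{E}[h_1^2]=r_n^8\sigma_1^2/16+O(r_n^9)$ gives exactly the constant in (I).

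\textbf{Integration by parts.} Expectations here are under $f$, so the identity you need is $\mathbb{E}[f(X_1)f''(X_1)]=\int_0^1f^2f''=-2\,\mathbb{E}[(f'(X_1))^2]$. This is what turns $3b_f-4a_f$ into $-3c_f$. The Lebesgue identity $\int ff''=-\int(f')^2$ would leave an uncentered coefficient.

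\textbf{Fourth moment in (II).} You need $\mathbb{E}\tilde h_2^4=O(r_n^5)$, not $O(r_n)$: $\tilde h_2$ is of size $O(r_n)$ on a set of probability $O(r_n)$. With $O(r_n)$, Hall's condition $n^{-1}\mathbb{E}\tilde h_2^4=o(r_n^6)$ would demand $nr_n^5\to\infty$, contradicting the regime.

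\textbf{Uniform case.} For uniform $f$ you need $h_1\equiv0$ exactly, which follows from rotation invariance and $\mathbb{E}h_1=0$. Regime (II) then permits $nr_n^7\to\infty$, where a mere $O(r_n^5)$ bound would let $n^5r_n^{10}$ swamp $n^4r_n^3$.

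\textbf{Normalization in (III).} Normalize $\sum_{i<j<k}h$ by $\binom n3\mathbb{E}h^2$, not $n^3\mathbb{E}h^2$. Then use $\mathbb{E}h^2=(1-2\mu_n+\frac23\mu_n^2)\mathbb{E}\Delta_{123}+\frac{\mu_n^2}{3}\mathbb{E}P_{123}=\frac38r_n^2\mathbb{E}[f^2(X_1)]+O(r_n^4)$ to recover the constant in (III).
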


Theorem \ref{mthm} establishes a central limit theorem for the global clustering coefficient of random geometric graphs, focusing particularly on the nonuniform case. After suitable centering and scaling, the global clustering coefficient converges in distribution to the standard normal distribution. When $n^3r_n^2=o(1)$, the expected number of triangles tends to zero according to Proposition \ref{propmain}. Consequently, this regime is excluded from the scope of Theorem \ref{mthm}.

The proof strategy for Theorem \ref{mthm} is structured as follows.
In Case (I), the dense regime, we show that the coefficient is asymptotically equivalent to a sum of i.i.d. terms, allowing us to apply the Lyapunov CLT (see Lemma \ref{varlem}). In Case (II), the intermediate dense case, we prove that the coefficient is asymptotically equivalent to a 
$U$-statistic of order 2 with a sample-size-dependent kernel. We then utilize the asymptotic theory for such statistics to derive the distribution (see Lemmas \ref{ustclt} and \ref{varlem2}). Finally, in Case (III), the sparse regime, since no existing theory directly applies as far as we know, we employ the method of moments to derive the asymptotic distribution (see Lemma \ref{varlem3}).

Theorem \ref{mthm} provides the exact limit of the global clustering coefficient; to the best of our knowledge, this value was previously unknown for non-uniform random geometric graphs.
According to Theorem  \ref{mthm}, the global clustering coefficient is asymptotically equal to $\mu_n$.  By Proposition \ref{propmain}, we have
\[\mathcal{C}_n=\frac{3+r_n^2a_f}{4+r_n^2b_f}+o_P(1).\]
For a uniform random geometric graph,  $f(x)$ is constant (specifically, $f(x)=1$),  which implies $a_f=b_f=0$; thus, the limit is precisely $3/4$, consistent with the result established in  \cite{Y25c}. For non-uniform random geometric graphs, the limit is $3/4$ with an additional error term of order $O(r_n^2)$. When $r_n=o(1)$, the limits are asymptotically identical.

Although the global clustering coefficients for both uniform and non-uniform RGGs share the same limit and asymptotic distribution, their convergence rates differ in the dense regime. Note that our result for non-uniform RGGs in Case (I) does not apply to the uniform case, as the latter corresponds to the setting where $\sigma_1=0$.
In the dense case where $nr_n^5=\omega(1)$, the clustering coefficient for non-uniform RGGs (with $\sigma_1>0$) converges to the standard normal distribution at a rate of $\frac{r_n^2}{\sqrt{n}}$. In contrast, for the uniform RGG, the convergence rate is $\frac{1}{n\sqrt{r_n}}$ (see Case (II)).  Since $\frac{1}{n\sqrt{r_n}}=o\left(\frac{r_n^2}{\sqrt{n}}\right)$ under the condition $nr_n^5=\omega(1)$, the convergence rate for the non-uniform case is notably slower than that of the uniform case. In the sparse regime where $nr_n^5=o(1)$, the convergence rates for uniform and non-uniform RGGs are identical. A similar phenomenon is observed in the asymptotic properties of the friendship paradox in \cite{Y24,Y26}. The paradox exhibits different limits for uniform and nonuniform random geometric graphs in the dense case, whereas these limits are identical in the sparse regime.

We now provide examples of density functions 
 satisfying the assumptions of Theorem \ref{mthm}. Beyond the obvious case of the uniform density function, we provide the following nontrivial example. Let us consider the family of von Mises densities on $[0,1]$, defined  by
    \begin{equation}\label{vonMises}
    f(x)=g(x)I[0\leq x\leq 1],\hskip 1cm g(x)=\frac{e^{\kappa \cos(2\pi x-\mu)}}{\mathcal{I}_0},
\end{equation}
    where $\kappa\geq0$, $\mu\in[0,1]$, and
    \[\mathcal{I}_0=\frac{1}{2\pi}\int_0^{2\pi}e^{\kappa \cos(x)}dx.\]
  The uniform distribution is recovered when $\kappa=0$. As the concentration parameter $\kappa$ increases, the distribution clusters more tightly around the mean $\mu$, eventually behaving like a normal distribution with variance $\frac{1}{\kappa}$.

The function $g(x)$ in (\ref{vonMises}) is smooth, with bounded and continuous derivatives of all orders. It is strictly bounded from below by the positive constant $(e^{\kappa}I_0(\kappa))^{-1}$,  ensuring the density is non-vanishing on its domain.  Furthermore, the periodicity of the cosine function implies that $g(x)$
 satisfies the periodic boundary conditions $g(x+1)=g(x)=g(x-1)$ for all $x\in\mathbb{R}$. Consequently, the von Mises distribution satisfies the assumptions of Theorem \ref{mthm}. The parameter $\sigma_1^2$ in Case (I) does not admit a simple closed-form expression. We provide several numerical values corresponding to different sets of parameters $(\kappa,\mu)$ in Table \ref{tab1}. All values of $\sigma_1^2$ are positive. A larger $\kappa$
 results in a larger $\sigma_1^2$, while the values remain invariant with respect to the mean $\mu$. A possible reason for this is that $f(x)$ 
 is a probability density on a circle and the distance is translation-invariant.

\begin{table}[!h]
\begin{center}
\caption{Numeric values of $\sigma_1^2$. }
\label{tab1}
\begin{tabular}{ |c| c| c|c| c| } 
\hline
$(\kappa,\ \mu)$ & (0.1,\ 0.1) & (0.5,\ 0.1) & (1.0,\ 0.1) & (5,\ 0.1)  \\
 \hline
$\sigma_1^2$ & 31.78  & 1264.83 & 13924.35  & 13646828.67    \\ 
 \hline  
\hline
$(\kappa,\ \mu)$ & (0.1,\ 0.3) & (0.5,\ 0.3) & (1.0,\ 0.3) & (5,\ 0.3)  \\
 \hline
$\sigma_1^2$ & 31.78  & 1264.83 & 13924.35  & 13646828.67\\  
 \hline  
 \hline
$(\kappa,\ \mu)$ & (0.1,\ 0.5) & (0.5,\ 0.5) & (1.0,\ 0.5) & (5,\ 0.5)  \\
 \hline
$\sigma_1^2$ & 31.78  & 1264.83 & 13924.35  & 13646828.67 
\\ 
 \hline
\end{tabular}
\end{center}
\end{table}

\section{Proof of main results}\label{secpf}
In this section, we provide detailed proofs of the main results.

\subsection{ Proof of Proposition \ref{propmain}}

Firstly, we prove (\ref{prope0}). Note that $d(X_1,X_2)\leq r_n$ is equivalent to three cases: (a) $r_n\leq X_1\leq 1-r_n$ and $X_1-r_n\leq X_2\leq X_1+r_n$, (b) $0\leq X_1\leq r_n$ and $0\leq X_2\leq X_1+r_n$ or $1-r_n+X_1\leq X_2\leq 1$, (c) $1-r_n\leq X_1\leq 1$ and $X_1-r_n\leq X_2\leq 1$ or $0\leq X_2\leq X_1+r_n-1$. Then the conditional expectation $\mathbb{E}[A_{12}|X_1]$ can be expressed as
\begin{equation*}
\mathbb{E}[A_{12}|X_1]=
        \begin{cases}
      \int_{X_1 - r_n}^{X_1 + r_n} f(x) dx, & \text{if  $r_n\leq X_1\leq 1-r_n$,}\\
      \int_{0}^{X_1 + r_n} f(x) dx+\int_{1- r_n+X_1}^{1} f(x)dx, & \text{if $0\leq X_1\leq r_n$,}\\
      \int_{X_1-r_n}^{1} f(x) dx+\int_{0}^{X_1+r_n-1} f(x)dx, & \text{if $1-r_n\leq X_1\leq 1$. }
    \end{cases} 
\end{equation*}
By assumption, $f(x)=g(x)I[0\leq x\leq 1]$, and $g(x)$ is a  periodic function with period one. Then $g(x+1)=g(x)=g(x-1)$ for all $x\in \mathbb{R}$. Using a change of variables in the definite integral, one has
\begin{align*}
\int_{1- r_n+X_1}^{1} f(x)dx&=\int_{1- r_n+X_1}^{1} g(x)I[0\leq x\leq1]dx\\
&=\int_{X_1-r_n}^{0} g(x+1)I[0\leq x+1\leq1] dx\\
&=\int_{X_1-r_n}^{0} g(x) dx,
\end{align*}
and
\begin{align*}
    \int_{0}^{X_1+r_n-1} f(x)dx&=\int_{1}^{X_1+r_n} g(x-1)I[0\leq x-1\leq1]dx\\
    &=\int_{1}^{X_1+r_n} g(x)dx.
\end{align*}
Hence, for all $X_1\in[0,1]$, we obtain
\begin{align*} 
    \mathbb{E}[A_{12}|X_1]&=\int_{X_1 - r_n}^{X_1 + r_n} g(x) dx.
\end{align*}
By assumption,  $g(x)$ has bounded fourth derivative. Moreover, $|x-X_1|\leq r_n$ for $X_1-r_n\leq x\leq X_1+r_n$. Using a fourth-order Taylor expansion of $g(x)$ at $X_1$, we have
\begin{align}\label{taylor1}
g(x)=\sum_{k=0}^3\frac{g^{(k)}(X_1)}{k!} (x-X_1)^k + O(r_n^4),
\end{align}
where the remainder term $O(r_n^4)$ does not depend on $X_1$. Based on (\ref{taylor1}), it is easy to obtain
\begin{align} \label{frieq1}
\int_{X_1-r_n}^{X_1 + r_n} g(x) dx  
 &= 2r_n g(X_1) + \frac{g''(X_1)}{3} r_n^3+ O(r_n^5).
\end{align}
Note that  $f(x)=g(x)I[0\leq x\leq 1]$. For $X_1\in[0,1]$, we have $f(X_1)=g(X_1)$ and $f''(X_1)=g''(X_1)$.
Thus, equation (\ref{prope0}) is proved.

{\bf Proof of equation (\ref{prope1})}. The equation (\ref{prope1}) is trivial, noting that  (\ref{prope0}) holds and 
\[\mathbb{E}[A_{12}A_{13}|X_1]=\mathbb{E}[A_{12}|X_1]\mathbb{E}[A_{13}|X_1]=\big(\mathbb{E}[A_{12}|X_1]\big)^2.\]

{\bf Proof of equation (\ref{prope01})}.  By the properties of conditional expectations and  equation (\ref{prope0}), we have
\begin{align}\nonumber
   \mathbb{E}[A_{12}A_{23}|X_1]&= \mathbb{E}[\mathbb{E}[A_{12}A_{23}|X_1,X_2]|X_1]\\  \nonumber
   &=\mathbb{E}[A_{12}\mathbb{E}[A_{23}|X_2]|X_1]\\ \label{pfeq1}
   &=2r_n\mathbb{E}[A_{12} f(X_2)|X_1] +  \frac{r_n^3}{3}\mathbb{E}[A_{12}f''(X_2)|X_1]+ O(r_n^5)\mathbb{E}[A_{12}|X_1].
\end{align}
By a similar argument as in the proof of equation (\ref{prope0}), we get
\begin{align}\nonumber
\mathbb{E}[A_{12} f(X_2)|X_1]&=\int_{X_1-r_n}^{X_1+r_n}g^2(x)dx\\ \nonumber
&=2r_ng^2(X_1)+\frac{2}{3}\big((g'(X_1))^2+g(X_1)g''(X_1)\big)r_n^3+O(r_n^5)\\ \label{pfeq2}
&=2r_nf^2(X_1)+\frac{2}{3}\big((f'(X_1))^2+f(X_1)f''(X_1)\big)r_n^3+O(r_n^5),
\end{align}
and
\begin{align}\label{pfeq3}
\mathbb{E}[A_{12}f''(X_2)|X_1]&=\int_{X_1-r_n}^{X_1+r_n}g (x)g''(x)dx=2r_ng (X_1)g''(X_1)+O(r_n^3).
\end{align}

Combining (\ref{pfeq1})-(\ref{pfeq3}) yields (\ref{prope01}).

{\bf Proof of equation (\ref{prope3}):}  We prove (\ref{prope3}) in three scenarios: (I) $X_1\in(r_n,1-r_n)$; (II) $X_1\in[0,r_n)$; and (III) $X_1\in[1-r_n,1]$.

Firstly, we consider $X_1\in(r_n,1-r_n)$. In this case, \(A_{12}A_{13}A_{23}=1\) if and only if one of the following conditions holds: (a) $X_2\in(X_1,X_1+r_n)$ and $X_3\in(X_1,X_1+r_n)$ or $X_3\in(X_2-r_n,X_1)$; (b) $X_2\in(X_1-r_n,X_1)$ and $X_3\in(X_1-r_n,X_1)$ or $X_3\in(X_1,X_2+r_n)$. Then the conditional expectation in (\ref{prope3})  can be expressed as
\begin{eqnarray}\nonumber
    \mathbb{E}[A_{12}A_{13}A_{23}|X_1]  &=&\int_{X_1}^{X_1+r_n}f(x_2)dx_2\left(\int_{X_1}^{X_1+r_n}f(x_3)dx_3+\int_{X_2-r_n}^{X_1}f(x_3)dx_3\right)\\ \label{febpfeq1}
    &&+\int_{X_1-r_n}^{X_1}f(x_2)dx_2\left(\int_{X_1-r_n}^{X_1}f(x_3)dx_3+\int_{X_1}^{X_2+r_n}f(x_3)dx_3\right)
\end{eqnarray}
The second term of (\ref{febpfeq1}) can be obtained from the first term by changing $r_n$ to $-r_n$. Therefore, we only need to calculate the first term of (\ref{febpfeq1}).

Since $f(x)$
 is assumed to have a bounded fourth derivative, Taylor expanding $f(x)$
 about $X_1$ yields
\begin{align}\label{febpfeq2}
\int_{X_1}^{X_1 + r_n} f(x_2) dx_2=f(X_1) r_n + \frac{f'(X_1)}{2} r_n^2 + \frac{f''(X_1)}{6} r_n^3 + \frac{f'''(X_1)}{4!} r_n^4 + O(r_n^5).
\end{align}
Then
\begin{align}\nonumber
&\left(\int_{X_1}^{X_1 + r_n} f(x_2) dx_2\right)^2 \\ \label{febpfeq3}
&= f(X_1)^2 r_n^2 + \frac{(f'(X_1))^2}{4} r_n^4 + f(X_1) f'(X_1) r_n^3 + \frac{2f(X_1) f''(X_1)}{3!} r_n^4 + O(r_n^5).
\end{align}
Let $F(t)=\int_0^tf(x)dx$. It is straightforward to get that
\begin{align}\label{febpfeq4}
\int_{X_1}^{X_1+r}f(x_2)dx_2\int_{X_2-r_n}^{X_1}f(x_3)dx_3&= \int_{X_1}^{X_1 + r_n} F(X_1) f(x_2) dx_2 - \int_{X_1}^{X_1 + r_n} F(x_2 - r_n) f(x_2) dx_2.
\end{align}
By (\ref{febpfeq2}), we have
\begin{align}\label{febpfeq5}
\int_{X_1}^{X_1 + r_n} F(X_1) f(x_2) dx_2=F(X_1) \left[f(X_1) r_n + \frac{f'(X_1)}{2} r_n^2 + \frac{f''(X_1)}{6} r_n^3 + \frac{f'''(X_1)}{4!} r_n^4 + O(r_n^5)\right].
\end{align}
Now we evaluate the second integral in (\ref{febpfeq4}). 
Applying the Taylor expansion to $F(x_2 - r_n)$ at $x_2$ yields
\begin{align}\label{febpfeq6}
F(x_2 - r_n) f(x_2) &= \left(F(x_2) - f(x_2) r_n + \frac{f'(x_2)}{2!} r_n^2 - \frac{f''(x_2)}{3!} r_n^3 + \frac{f'''(x_2)}{4!} r_n^4+O(r_n^5)\right) f(x_2).
\end{align}

The integral of the first term of (\ref{febpfeq6}) is equal to
\begin{align}\label{febpfeq7}
\int_{X_1}^{X_1 + r_n} F(x_2) f(x_2) dx_2 &=\frac{1}{2} \left[F(X_1 + r_n)^2 - F(X_1)^2\right].
\end{align}
By the Taylor expansion of $F(X_1 + r_n)$ at $X_1$, we have
\begin{align*}
F(X_1 + r_n)=F(X_1)+f(X_1)r_n+\frac{1}{2}f'(X_1)r_n^2+\frac{1}{3!}f''(X_1)r_n^3+\frac{1}{4!}f'''(X_1)r_n^4+O(r_n^5).
\end{align*}
Then we have
\begin{align}\nonumber
\int_{X_1}^{X_1 + r_n} F(x_2) f(x_2) dx_2 &= \frac{1}{2} \bigg[ f(X_1)^2 r_n^2 + \frac{(f'(X_1))^2}{4} r_n^4 + 2 F(X_1) f(X_1) r_n + F(X_1) f'(X_1) r_n^2\\ \nonumber
&\quad + \frac{2 \cdot F(X_1) f''(X_1)}{3!} r_n^3 + \frac{2 F(x_1) f'''(X_1)}{4!} r_n^4 + f(X_1) f'(X_1) r_n^3\\ \label{febpfeq8}
& + \frac{2 f(X_1) f''(X_1)}{3!} r_n^4+ O(r_n^5)\bigg].
\end{align}

Substituting the Taylor expansion of $f(x)$, the integral of the second term of (\ref{febpfeq6}) is equal to
\begin{align}\nonumber
&r_n \int_{X_1}^{X_1 + r_n} f(x_2)^2 dx_2 \\ \nonumber
&= r_n \int_{X_1}^{X_1 + r_n} \Big[f(X_1) + f'(X_1)(x_2 - X_1) + \frac{f''(X_1)}{2!}(x_2 - X_1)^2 \\ \nonumber
&\quad+ \frac{f'''(X_1)}{3!}(x_2 - X_1)^3 + O(r_n^4)\Big]^2 dx_2 \\ \nonumber
& = r_n \int_{X_1}^{X_1 + r_n} \Big[f(X_1)^2 + (f'(X_1))^2(x_2 - X_1)^2 + 2 f(X_1) f'(X_1)(x_2 - X_1)\\ \nonumber
&\quad+ f(X_1) f''(X_1)(x_2 - X_1)^2\Big] dx_2+O(r_n^5)\\ \label{febpfeq9}
&= f(X_1)^2 r_n^2 + \frac{(f'(X_1))^2}{3} r_n^4 + f(X_1) f''(X_1) \frac{r_n^4}{3} + f(X_1) f'(X_1) r_n^3 + O(r_n^5).
\end{align}

Similarly, the integral of the third term of (\ref{febpfeq6}) is equal to
\begin{align}\nonumber
\frac{r_n^2}{2!} \int_{X_1}^{X_1 + r_n} f(x_2) f'(x_2) dx_2&= \frac{r_n^2}{4} \left[f(X_1 + r_n)^2 - f(X_1)^2\right]\\ \nonumber
&= \frac{r_n^2}{4} \left[\left(f(X_1) + f'(X_1) r_n + \frac{f''(X_1)}{2} r_n^2\right)^2 - f(X_1)^2\right] + O(r_n^5) \\ \label{febpfeq10}
&= \frac{r_n^2}{4} \left[ (f'(X_1))^2 r_n^2 + 2 f(X_1) f'(X_1) r_n + f(X_1) f''(X_1) r_n^2\right]+O(r_n^5).
\end{align}

The integral of the fourth term of (\ref{febpfeq6}) is equal to
\begin{align} \label{febpfeq11}
\frac{r_n^3}{3!} \int_{X_1}^{X_1 + r_n} f''(x_2) f(x_2) dx_2
&= \frac{f''(X_1) f(X_1)}{3!} r_n^4 + O(r_n^5).
\end{align}

Combining (\ref{febpfeq4})-(\ref{febpfeq11}) yields
\begin{align}\nonumber
&\int_{X_1}^{X_1 + r_n} f(x_2) dx_2 \int_{X_2-r_n}^{X_1} f(x_3) dx_3 \\  \label{febpfeq12}
&= 
 \frac{r_n^2}{2} f^2(X_1)  + \left[\frac{f(X_1) f''(X_1)}{12} - \frac{(f'(X_1))^2}{24}\right] r_n^4 + O(r_n^5).
\end{align}

By (\ref{febpfeq3}) and (\ref{febpfeq12}), we have
\begin{align}\nonumber
&\int_{X_1}^{X_1 + r_n} f(x_2) dx_2 \int_{X_2 - r_n}^{X_1 + r_n} f(x_3) dx_3\\ \label{febpfeq13}
& = \frac{3}{2} f(X_1)^2 r_n^2 + \frac{5}{24} \left[(f'(X_1))^2 + 2 f(X_1) f''(X_1)\right] r_n^4 + f(X_1) f'(X_1) r_n^3 + O(r_n^5).
\end{align}

Changing $r_n$ of the first term of (\ref{febpfeq1}) to $-r_n$, we get the second term of (\ref{febpfeq1}). That is,
\begin{eqnarray}\nonumber &&\int_{X_1-r_n}^{X_1}f(x_2)dx_2\left(\int_{X_1-r}^{X_1}f(x_3)dx_3+\int_{X_1}^{X_2+r_n}f(x_3)dx_3\right)\\ \label{febpfeq14}
&=& \frac{3}{2} f(X_1)^2 r_n^2 + \frac{5}{24} \left[(f'(X_1))^2 + 2 f(X_1) f''(X_1)\right] r_n^4 - f(X_1) f'(X_1) r_n^3 + O(r_n^5).
\end{eqnarray}
In view of (\ref{febpfeq1}), (\ref{febpfeq13}) and (\ref{febpfeq14}), it follows that for $r_n<X_1\leq 1-r_n$:  
\begin{eqnarray}\nonumber \mathbb{E}[A_{12}A_{13}A_{23}|X_1]= 3r_n^2f^2(X_1)  + \frac{5r_n^4}{12} \left[(f'(X_1))^2 + 2 f(X_1) f''(X_1)\right] + O(r_n^5).
\end{eqnarray}

Now we consider the case $X_1\in[0,r_n)$. In this case, \(A_{12}A_{13}A_{23}=1\) is satisfied if and only if one of the following four configurations occurs: If $X_2\in[0,X_1]$, then $X_3\in(X_1,X_2+r_n)$ or $X_3\in(0,X_1)$ or $X_3\in(1+X_1-r_n,1)$. If $X_2\in(1+X_1-r_n,1)$, then $X_3\in[0,X_1]$ or $X_3\in[1+X_1-r_n,1]$ or $X_3\in[X_1,X_2+r_n-1]$. If $X_2\in[X_1,r_n]$, then $X_3\in[0,X_1+r_n]$ or $X_3\in[X_2+1-r_n,1]$. If $X_2\in[r_n,X_1+r_n]$, then $X_3\in(X_1,X_1+r_n)$ or $X_3\in(X_2-r_n,X_1)$. Then the conditional expectation $\mathbb{E}[A_{12}A_{13}A_{23}|X_1]$ is written as
\begin{align*}
\mathbb{E}[A_{12}A_{13}A_{23}|X_1]  
&=\int_{0}^{X_1}f(x_2)dx_2\left(\int_{0}^{x_2+r_n}f(x_3)dx_3+\int_{1+X_1-r_n}^{1}f(x_3)dx_3\right)\\
&\quad+\int_{1+X_1-r_n}^{1}f(x_2)dx_2\left(\int_{0}^{x_2+r_n-1}f(x_3)dx_3+\int_{1+X_1-r_n}^{1}f(x_3)dx_3\right)\\  \nonumber
&\quad+\int_{X_1}^{r_n}f(x_2)dx_2\left(\int_0^{X_1+r_n}f(x_3)dX_3+\int_{X_2+1-r_n}^1f(x_3)dX_3\right)\\  \nonumber
    &\quad+\int_{r_n}^{X_1+r_n}f(x_2)dx_2\int_{x_2-r_n}^{X_1+r_n}f(x_3)dx_3.
\end{align*}

By assumption, $f(x)=g(x)I[0\leq x\leq 1]$, and \(g(x)\)  satisfies \(g(x+1)=g(x)=g(x-1)\) for all \(x\in \mathbb{R}\). Applying a change of variables to the definite integrals, we obtain
\begin{align*}
\int_{1+X_1-r_n}^{1}f(x_3)dx_3&=\int_{X_1-r_n}^{0}g(y+1)dy=\int_{X_1-r_n}^{0}g(y)dy\\
\int_{x_2+1-r_n}^1f(x_3)dx_3&=\int_{x_2-r_n}^{0}g(y+1)dy=\int_{x_2-r_n}^{0}g(y)dy,
\end{align*}
\begin{align*}
\int_{1+X_1-r_n}^{1}f(x_2) \int_{X_1-r_n}^{x_2+r_n-1}f(x_3)dx_3dx_2&=\int_{X_1-r_n}^{0}g(x_2+1) \int_{X_1-r_n}^{x_2+r_n}g(x_3)dx_3dx_2\\
&=\int_{X_1-r_n}^{0}g(x_2) \int_{X_1-r_n}^{x_2+r_n}g(x_3)dx_3dx_2.
\end{align*}
Therefore, we have
\begin{align*}
\mathbb{E}[A_{12}A_{13}A_{23}|X_1]  
&=\int_{0}^{X_1}f(x_2)dx_2\int_{X_1-r_n}^{x_2+r_n}f(x_3)dx_3+\\
&\quad+\int_{X_1-r_n}^{0}f(x_2) \int_{X_1-r_n}^{x_2+r_n}f(x_3)dx_3dx_2\\  \nonumber
    &\quad+\int_{X_1}^{X_1+r_n}g(x_2)dx_2\int_{x_2-r_n}^{X_1+r_n}g(x_3)dx_3\\
    &=\int_{X_1-r_n}^{X_1}g(x_2)dx_2\int_{X_1-r_n}^{x_2+r_n}g(x_3)dx_3\\\nonumber
    &\quad+\int_{X_1}^{X_1+r_n}g(x_2)dx_2\int_{x_2-r_n}^{X_1+r_n}g(x_3)dx_3,
\end{align*}
By the same argument as used in (\ref{febpfeq1}), we conclude (\ref{prope3}) holds for $X_1\in[0,r_n)$.

For $X_1\in[1-r_n,1]$, it is easy to show that (\ref{prope3}) holds by a similar argument.\\

{\bf Proof of equations (\ref{prope4}) and (\ref{prope2}): } Equations (\ref{prope4}) and (\ref{prope2}) follow directly from (\ref{prope1}) and (\ref{prope3}), respectively.\\

{\bf Proof of equation (\ref{defafbf1}): } By (\ref{prope4}) and (\ref{prope2}), it is easy to verify that
\begin{align*}
    \mu_n&=\frac{3r_n^2\int_0^1f^3(x)dx+ \frac{5r_n^4}{12} \int_0^1f(x)\left[(f'(x))^2 + 2f(x) f''(x)\right]dx  + O(r_n^5)}{4 r_n^2\int_0^1f^3(x)dx  + \frac{4r_n^4 }{3} \int_0^1f^2(x) f''(x)dx + O(r_n^5)}\\
    &=\frac{3+r_n^2a_f}{4+r_n^2b_f}+O(r_n^3),
\end{align*}
Then the proof is complete.

\qed

\subsection{Some lemmas}
Before proving Theorem \ref{mthm}, we present several lemmas.
For convenience, let $\Delta_{123}=A_{12}A_{23}A_{31}$ and $P_{123}=A_{12}A_{23}$. Recall that $\mu_n=\frac{\mathbb{E}[\Delta_{123}]}{\mathbb{E}[P_{123}]}$. The symmetric function $ h(X_1, X_2, X_3)$ defined in (\ref{deofhf}) is equal to
\begin{align}\label{defhf}
    h(X_1, X_2, X_3) = \Delta_{123} - \frac{\mu_n}{3}(P_{123} + P_{213} + P_{231}).
\end{align}
Let 
\begin{align}\label{h1h2xx}
  h_1(X_1) = \mathbb{E}[h(X_1, X_2, X_3) | X_1],\ \ \ \ \ \  h_2(X_1, X_2) = \mathbb{E}[h(X_1, X_2, X_3) | X_1, X_2]. 
\end{align}

First, we provide the asymptotic count of 2-paths associated with the global clustering coefficient.
\begin{Lemma}\label{00varlem}
Suppose the assumption of Proposition \ref{propmain} holds and $n^3r_n^2=\omega(1)$. Then we have
\[\frac{\sum_{i\neq j\neq k}A_{ij}A_{jk}}{4n^3r_n^2\mathbb{E}[f^2(X_1)]}=1+o_P(1).\]
\end{Lemma}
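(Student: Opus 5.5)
Write $S_n=\sum_{i\neq j\neq k}A_{ij}A_{jk}$. The plan is a second-moment argument: show that $\mathbb{E}[S_n]=4n^3r_n^2\mathbb{E}[f^2(X_1)](1+o(1))$ and that $\mathrm{Var}(S_n)=o\big((\mathbb{E}[S_n])^2\big)$. The claim then follows from Chebyshev's inequality.

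\textbf{Mean.} By exchangeability,
\[
\mathbb{E}[S_n]=n(n-1)(n-2)\,\mathbb{E}[A_{12}A_{23}].
\]
The term $\mathbb{E}[A_{12}A_{23}]$ is a 2-path centred at vertex $2$, so it equals $\mathbb{E}[A_{21}A_{23}]=\mathbb{E}[A_{12}A_{13}]$ after relabelling. By (\ref{prope4}) of Proposition \ref{propmain},
\[
\mathbb{E}[A_{12}A_{13}]=4r_n^2\mathbb{E}[f^2(X_1)]+O(r_n^4).
\]
Since $f$ is bounded away from zero, $\mathbb{E}[f^2(X_1)]$ is a positive constant. Together with $r_n=o(1)$ this gives $\mathbb{E}[S_n]=4n^3r_n^2\mathbb{E}[f^2(X_1)](1+o(1))$.

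\textbf{Variance.} Expand $\mathrm{Var}(S_n)$ as a sum of covariances over pairs of ordered triples $(i,j,k)$ and $(i',j',k')$. Pairs with disjoint index sets are independent and contribute zero. The remaining pairs are grouped by the number $s\in\{1,2,3\}$ of shared vertices.

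\begin{itemize}
\item Each class with $s$ shared vertices has $O(n^{6-s})$ pairs.
\item Each covariance is bounded by $\mathbb{E}$ of the product of the two 2-path indicators, i.e. the expected count of a connected union graph on $6-s$ vertices.
\item Because $f$ is bounded, each vertex attached to an already-placed vertex contributes a factor $O(r_n)$. Hence a connected graph on $m$ vertices has expected indicator $O(r_n^{m-1})$.
\end{itemize}

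The contributions of the three classes are then:
\begin{itemize}
\item $s=1$: $O(n^5r_n^4)$;
\item $s=2$: $O(n^4r_n^3)$;
\item $s=3$: $O(n^3r_n^2)$.
\end{itemize}
Divide each by $(\mathbb{E}[S_n])^2=\Theta(n^6r_n^4)$. The ratios are $O(1/n)$, $O(1/(n^2r_n))$ and $O(1/(n^3r_n^2))$.

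\textbf{Main obstacle.} The first and third ratios vanish, the third because $n^3r_n^2=\omega(1)$. The middle term $n^{-2}r_n^{-1}$ is the delicate one. It can be written as $(n^3r_n^2)^{-1/2}\cdot n^{-1/2}$, and since $n^3r_n^2=\omega(1)$ this tends to zero.

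The bound also has to hold uniformly over the possible union graphs. In particular, when two 2-paths share two vertices, the union may contain a triangle or a 4-cycle. The uniform bound $O(r_n^{m-1})$ survives this: take a spanning tree of the union graph, bound every indicator outside the tree by $1$, and integrate the tree leaf by leaf using $\sup f<\infty$.

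With $\mathrm{Var}(S_n)=o\big((\mathbb{E}[S_n])^2\big)$ established, Chebyshev's inequality gives $S_n/\mathbb{E}[S_n]=1+o_P(1)$. Combined with the mean asymptotics, this yields the lemma.
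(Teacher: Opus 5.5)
Your proposal is correct and follows essentially the same route as the paper. You take the mean from (\ref{prope4}), split the variance by the number of shared vertices to get $O(n^5r_n^4+n^4r_n^3+n^3r_n^2)=o(n^6r_n^4)$, and conclude with Chebyshev's (Markov's) inequality. The differences are cosmetic: you bound every overlapping term at once with the spanning-tree argument (the device the paper itself uses later, in the proof of Lemma \ref{varlem3}) instead of case by case through conditional expectations, and you state explicitly the check $n^4r_n^3/(n^6r_n^4)=(n^3r_n^2)^{-1/2}n^{-1/2}\to 0$, which the paper leaves implicit.
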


{\bf Proof of Lemma \ref{00varlem}:} By Proposition \ref{propmain}, $\mathbb{E}[A_{12}A_{23}]=4r_n^2\mathbb{E}[f^2(X_1)]+O(r_n^4)$. The proof proceeds by showing that $\sum_{i\neq j\neq k}A_{ij}A_{jk}$ is asymptotically equal to $n^3\mathbb{E}[A_{12}A_{23}]$. To this end, we show that the variance of $\sum_{i\neq j\neq k}A_{ij}A_{jk}$ is of smaller order than $n^6(\mathbb{E}[A_{12}A_{23}])^2$. The variance can be expressed as
\begin{align}\nonumber
    &\mathbb{E}\left[\left(\sum_{i\neq j\neq k}(A_{ij}A_{jk}-\mathbb{E}[A_{ij}A_{jk}])\right)^2\right]\\ \label{ervea1}
    &=\sum_{\substack{i\neq j\neq k\\ i_1\neq j_1\neq k_1}}\mathbb{E}\left[(A_{ij}A_{jk}-\mathbb{E}[A_{ij}A_{jk}])(A_{i_1j_1}A_{j_1k_1}-\mathbb{E}[A_{ij}A_{jk}])\right].
\end{align}
If $\{i,j,k\}\cap\{i_1,j_1,k_1\}=\emptyset$, then $A_{ij}A_{jk}$ and $A_{i_1j_1}A_{j_1k_1}$ are independent. In this case, the expectation in (\ref{ervea1}) vanishes. Then $\{i,j,k\}\cap\{i_1,j_1,k_1\}\neq\emptyset$. Suppose  $|\{i,j,k\}\cap\{i_1,j_1,k_1\}|=1$. There are at most $n^5$ such indices. In the case where $i=i_1$, it follows from Proposition \ref{propmain} that
\begin{align*}
&\mathbb{E}\left[(A_{ij}A_{jk}-\mathbb{E}[A_{ij}A_{jk}])(A_{i_1j_1}A_{j_1k_1}-\mathbb{E}[A_{ij}A_{jk}])\right]\\
&=\mathbb{E}\left[A_{ij}A_{jk}A_{ij_1}A_{j_1k_1}\right]-\mathbb{E}[A_{ij}A_{jk}]\mathbb{E}[A_{ij}A_{jk}]\\
&=\mathbb{E}\left[\mathbb{E}\left[A_{ij}A_{jk}|X_i\right]\mathbb{E}\left[A_{ij_1}A_{j_1k_1}|X_i\right]\right]-\mathbb{E}[A_{ij}A_{jk}]\mathbb{E}[A_{ij}A_{jk}]\\
&=O(r_n^4).
\end{align*}
The remaining cases, such as  $i=j_1$ and $i=k_1$,  can be bounded similarly.  Suppose  $|\{i,j,k\}\cap\{i_1,j_1,k_1\}|=2$. There are at most $n^4$ such indices.  If $i_1,j_1\in \{i,j,k\}$, then
\begin{align*}
&|\mathbb{E}\left[(A_{ij}A_{jk}-\mathbb{E}[A_{ij}A_{jk}])(A_{i_1j_1}A_{j_1k_1}-\mathbb{E}[A_{ij}A_{jk}])\right]|\\
&\leq\mathbb{E}\left[A_{ij}A_{jk}A_{j_1k_1}\right]+\mathbb{E}[A_{ij}A_{jk}]\mathbb{E}[A_{ij}A_{jk}]\\
&=O(r_n^3).
\end{align*}
The remaining cases $i_1,k_1\in \{i,j,k\}$ and $j_1,k_1\in \{i,j,k\}$  can be bounded similarly.  Suppose  $\{i,j,k\}=\{i_1,j_1,k_1\}$. There are at most $n^3$ such indices. In this case, 
\begin{align*}
&|\mathbb{E}\left[(A_{ij}A_{jk}-\mathbb{E}[A_{ij}A_{jk}])(A_{i_1j_1}A_{j_1k_1}-\mathbb{E}[A_{ij}A_{jk}])\right]|\\
&\leq\mathbb{E}\left[A_{ij}A_{jk}\right]+\mathbb{E}[A_{ij}A_{jk}]\mathbb{E}[A_{ij}A_{jk}]\\
&=O(r_n^2).
\end{align*}
In summary, we have
\begin{align*}
    \mathbb{E}\left[\left(\sum_{i\neq j\neq k}(A_{ij}A_{jk}-\mathbb{E}[A_{ij}A_{jk}])\right)^2\right]=O(n^5r_n^4+n^4r_n^3+n^3r_n^2)=o(n^6r_n^4).
\end{align*}
Then the result of Lemma \ref{00varlem} follows from  Markov's inequality. 

\qed

Next, we present an asymptotic result for 
$U$-statistics of order 2 with sample-size-dependent kernels.
Let $U_n=\frac{1}{2}\sum_{i\neq j}k_n(X_i,X_j)$, where $k_n(x,y)$ is a symmetric function. Without loss of generality, let $\mathbb{E}[k_n(X_i,X_j)]=0$. Let $q_n(x)=\mathbb{E}[k_n(x,X_1)]$. Denote $v_n^2=\frac{n^2}{2}\mathbb{E}[k_n^2(X_1,X_2)]+n^3\mathbb{E}[q_n^2(X_1)]$.The following result from \cite{JJ86} characterizes the asymptotic distribution of this $U_n$.

\begin{Lemma}\label{ustclt}
Suppose the following conditions holds.
\begin{align}\label{ustclt1}
    \sup_{x,y}|k_n(x,y)|&=o(v_n),\\ \label{ustclt2}
    \sup_{x}\mathbb{E}[|k_n(x,X_1)|]&=o\left(\frac{v_n}{n}\right).
\end{align}
Then
\[\frac{U_n}{v_n}\Rightarrow N(0,1).\]
\end{Lemma}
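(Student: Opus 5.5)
This lemma is taken from \cite{JJ86}, so the plan is to sketch why it holds via the Hoeffding decomposition and a martingale central limit theorem. Write $k_n(X_i,X_j)=q_n(X_i)+q_n(X_j)+g_n(X_i,X_j)$, where $g_n(x,y)=k_n(x,y)-q_n(x)-q_n(y)$ is degenerate: $\mathbb{E}[g_n(x,X_1)]=0$. Then
\[U_n=(n-1)\sum_{i=1}^n q_n(X_i)+\frac12\sum_{i\neq j}g_n(X_i,X_j)=:L_n+D_n.\]
$L_n$ and $D_n$ are uncorrelated, with $\mathrm{Var}(L_n)\approx n^3\mathbb{E}[q_n^2]$ and $\mathrm{Var}(D_n)=\frac{n(n-1)}{2}\mathbb{E}[g_n^2]\approx \frac{n^2}{2}\mathbb{E}[k_n^2]-n^2\mathbb{E}[q_n^2]$. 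Hence $\mathrm{Var}(U_n)=v_n^2(1+o(1))$; the discrepancy $n^2\mathbb{E}[q_n^2]$ is negligible relative to $n^3\mathbb{E}[q_n^2]$.

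Rather than treating $L_n$ and $D_n$ separately, which fails when their variances are comparable, I would write $U_n=\sum_{j=2}^n Y_{nj}$ with
\[Y_{nj}=\sum_{i<j}k_n(X_i,X_j),\]
and center to obtain martingale differences $\xi_{nj}=Y_{nj}-\mathbb{E}[Y_{nj}\mid \mathcal F_{j-1}]+\text{(projection pieces)}$. Equivalently, order the terms as $\xi_{nj}=(n-1)q_n(X_j)+\sum_{i<j}g_n(X_i,X_j)$, corrected so that the $q_n$ part is assigned to index $j$. The cleanest choice is $\xi_{nj}=(n-1)q_n(X_j)+\sum_{i<j}g_n(X_i,X_j)$. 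This is a martingale difference sequence with respect to $\mathcal F_j=\sigma(X_1,\dots,X_j)$, and $\sum_j\xi_{nj}=U_n$. I then apply the martingale CLT (e.g.\ Hall--Heyde), which requires two conditions.

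\textbf{(a) Conditional variance.} Show that $v_n^{-2}\sum_j\mathbb{E}[\xi_{nj}^2\mid\mathcal F_{j-1}]\to1$ in probability. The conditional variance expands into the constant $(n-1)^2\mathbb{E}[q_n^2]$, the cross term $2(n-1)\sum_{i<j}\mathbb{E}[q_n(X)g_n(X_i,X)]$, and the quadratic term $\sum_{i,i'<j}G_n(X_i,X_{i'})$, where $G_n(x,y)=\mathbb{E}[g_n(x,X)g_n(y,X)]$. Its diagonal part concentrates around its mean by a variance computation. The off-diagonal part and the cross term have mean zero, and I would bound their second moments by $n^4\mathbb{E}[G_n^2]$ and $n^4\mathbb{E}[(\mathbb{E}[q_n(X)g_n(\cdot,X)])^2]$. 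These are controlled using the sup bounds: for example,
\[\mathbb{E}[G_n^2]\le \sup_{x,y}|g_n|\,\sup_x\mathbb{E}|g_n(x,X)|\,\mathbb{E}[g_n^2],\]
which by (\ref{ustclt1})--(\ref{ustclt2}) is $o(v_n\cdot v_n/n)\cdot\mathbb{E}[g_n^2]=o(v_n^2/n)\cdot O(v_n^2/n^2)=o(v_n^4/n^3)$. Then $n^4\mathbb{E}[G_n^2]/v_n^4\to0$ needs one more factor. I would instead use the sharper bound $n^3\mathbb{E}[G_n^2]$ for the off-diagonal sum over $i\neq i'$ with a common $X$, which is summable over $j$ into $n^4$ only after exploiting $j$-averaging; this bookkeeping is the step I expect to be the main obstacle.

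\textbf{(b) Lindeberg condition.} Verify it via the conditional Lindeberg/fourth-moment bound $\sum_j\mathbb{E}[\xi_{nj}^4]=o(v_n^4)$. Bound $\mathbb{E}[(\sum_{i<j}g_n(X_i,X_j))^4]\lesssim j\,\mathbb{E}[g_n^4]+j^2(\mathbb{E}[g_n^2])^2$, and use $\mathbb{E}[g_n^4]\le(\sup|g_n|)^2\,\mathbb{E}[g_n^2]=o(v_n^2)\,\mathbb{E}[g_n^2]$. For the linear part, $n^4\,\mathbb{E}[q_n^4]\le n^4(\sup_x\mathbb{E}|k_n(x,X)|)^2\,\mathbb{E}[q_n^2]=o(n^2v_n^2)\,\mathbb{E}[q_n^2]$. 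Summing over $j$ and comparing with $v_n^4\ge n^2\mathbb{E}[k_n^2]\cdot n^3\mathbb{E}[q_n^2]$-type lower bounds gives $o(v_n^4)$. Since $\sup|g_n|\le 3\sup|k_n|$ and $\sup_x\mathbb{E}|g_n(x,X)|\le 3\sup_x\mathbb{E}|k_n(x,X)|$, all hypotheses transfer from $k_n$ to $g_n$ and $q_n$. With (a) and (b) in hand, the martingale CLT yields $U_n/v_n\Rightarrow N(0,1)$.
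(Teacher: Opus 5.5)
The paper gives no proof of this lemma; it quotes it from \cite{JJ86}. Your route is a sound way to make it self-contained: Hoeffding projection, martingale differences $\xi_{nj}=(n-1)q_n(X_j)+\sum_{i<j}g_n(X_i,X_j)$, then the martingale CLT. Your variance identity and the estimates for the linear part are fine.

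Step (a), however, has a genuine gap exactly where you flagged it, and the remedy you suggest cannot work. Summed over $j$, the off-diagonal part of the conditional variance is $2\sum_{i<i'}(n-i')G_n(X_i,X_{i'})$. Since $\mathbb{E}[G_n(x,X_1)]=0$ for every $x$, this is a degenerate $U$-statistic. Its variance is exactly $4\sum_{i<i'}(n-i')^2\,\mathbb{E}[G_n^2(X_1,X_2)]\asymp n^4\mathbb{E}[G_n^2]$. No ``$j$-averaging'' brings this down to $n^3\mathbb{E}[G_n^2]$, so your estimate $\mathbb{E}[G_n^2]=o(v_n^4/n^3)$ only yields $o(nv_n^4)$. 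Likewise, the cross term is $2(n-1)\sum_i(n-i)\psi_n(X_i)$ with $\psi_n(x)=\mathbb{E}[q_n(X_1)g_n(x,X_1)]$. Its second moment is of order $n^5\mathbb{E}[\psi_n^2]$, not $n^4$, and you give no bound for it.

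The missing idea is to spend (\ref{ustclt2}) twice rather than (\ref{ustclt1}) once. Put $M_n=\sup_x\mathbb{E}|g_n(x,X_1)|\le 3\sup_x\mathbb{E}|k_n(x,X_1)|=o(v_n/n)$. Cauchy--Schwarz with weight $|g_n(y,\cdot)|$ gives $G_n(x,y)^2\le M_n\,\mathbb{E}[g_n^2(x,X_1)|g_n(y,X_1)|]$. Integrating in $y$ and using the symmetry of $g_n$,
\[
\mathbb{E}[G_n^2(X_1,X_2)]\le M_n^2\,\mathbb{E}[g_n^2]=o\Big(\frac{v_n^4}{n^4}\Big),\qquad \mathbb{E}[\psi_n^2]\le M_n^2\,\mathbb{E}[q_n^2]=o\Big(\frac{v_n^4}{n^5}\Big).
\]
Equivalently, the integral operator with kernel $g_n$ has $L^2$ norm at most $M_n$ by the Schur test. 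Hence both terms are $o(v_n^4)$.

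Your bound $\sup|g_n|\,M_n\,\mathbb{E}[g_n^2]$ is the right one for the \emph{diagonal} quantity $\mathbb{E}[G_n(X_1,X_1)^2]$, because $G_n(x,x)\le\sup|g_n|\,M_n$. It gives $\mathrm{Var}\big(\sum_i(n-i)G_n(X_i,X_i)\big)\le n^3\mathbb{E}[G_n(X_1,X_1)^2]=o(v_n^4)$, and this is where (\ref{ustclt1}) is genuinely needed.

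The same diagonal quantity is what actually appears in (b). Conditionally on $X_j$, you have $\mathbb{E}[(\sum_{i<j}g_n(X_i,X_j))^4]=(j-1)\mathbb{E}[g_n^4]+3(j-1)(j-2)\mathbb{E}[G_n(X_1,X_1)^2]$. By Jensen, $\mathbb{E}[G_n(X_1,X_1)^2]\ge(\mathbb{E}[g_n^2])^2$, so your term $j^2(\mathbb{E}[g_n^2])^2$ is not an upper bound. Using the diagonal estimate $o(v_n^4/n^3)$ instead still gives $o(v_n^4)$ after summing over $j$. With these corrections, (a) and (b) hold and the martingale CLT yields the lemma.
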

Lemma \ref{ustclt} is instrumental in establishing the asymptotic distribution of the global clustering coefficient for nonuniform random geometric graphs in the intermediate dense regime.

Next, we provide several lemmas that establish the order of the variances and the asymptotic distributions of $h_1(X_1)$,  $h_2(X_1,X_2)$ and  $h(X_1, X_2, X_3)$ defined in (\ref{defhf}) and (\ref{h1h2xx}).

\begin{Lemma}\label{varlem}
    Under the assumption of Proposition \ref{propmain}, we have
    \begin{align}\label{varlempf1}
        \mathbb{E}[h_1^2(X_1)]= \frac{r_n^8}{16}\sigma_1^2+O(r_n^9),
    \end{align}
    where $\sigma_1^2=\mathbb{E}\big[\big(-3c_ff^2(X_1)-2f(X_1)f^{\prime\prime}(X_1)-(f^{\prime}(X_1))^2\big)^2\big]$.
    If $\sigma_1^2>0$, then 
   \begin{align}\label{1varlem}
       \frac{4\sum_ih_1(X_i)}{\sqrt{n}r_n^4\sigma_1}\Rightarrow N(0,1).
   \end{align}
\end{Lemma}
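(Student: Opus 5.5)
The plan is to compute $h_1(X_1)$ explicitly to order $r_n^4$, using the conditional expansions in Proposition \ref{propmain}, and then to apply the Lyapunov CLT to the i.i.d. sum $\sum_i h_1(X_i)$.

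\textbf{Step 1: reduce $h_1$ to the proposition's expansions.} By (\ref{defhf}),
\[
h_1(X_1)=\mathbb{E}[\Delta_{123}|X_1]-\frac{\mu_n}{3}\big(\mathbb{E}[P_{123}|X_1]+\mathbb{E}[P_{213}|X_1]+\mathbb{E}[P_{231}|X_1]\big).
\]
In $P_{213}=A_{21}A_{13}$ the vertex $1$ is the center, so its conditional expectation is given by (\ref{prope1}). In $P_{123}$ and $P_{231}$ the vertex $1$ is an endpoint, so both are given by (\ref{prope01}). The triangle term is (\ref{prope3}). For $\mu_n$ I use (\ref{defafbf1}) in the expanded form
\[
\mu_n=\frac34+r_n^2\Big(\frac{a_f}{4}-\frac{3b_f}{16}\Big)+O(r_n^3).
\]
All remainders are uniform in $X_1$, which I will use repeatedly.

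\textbf{Step 2: expand $h_1$ order by order.} At order $r_n^2$ the terms cancel: $3f^2-\frac34\cdot\frac13\cdot 12f^2=0$.

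At order $r_n^4$ there are two contributions.
\begin{itemize}
\item The $r_n^4$ parts of the conditional moments, multiplied by $\mu_n\approx 3/4$, give
\[
\frac{5}{12}\big(f'^2+2ff''\big)-\frac14\Big[\frac43 ff''+\frac23\big(4f'^2+6ff''\big)\Big]=-\frac14 f'^2-\frac12 ff''.
\]
\item The $r_n^2$ correction in $\mu_n$, multiplied by the leading $12r_n^2f^2/3$, gives $-r_n^4f^2\big(a_f-\tfrac34 b_f\big)$.
\end{itemize}

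The key simplification uses periodicity and integration by parts on the circle:
\[
\mathbb{E}[f(X_1)f''(X_1)]=\int_0^1 f^2f''\,dx=-2\int_0^1 f f'^2\,dx=-2\,\mathbb{E}[(f'(X_1))^2].
\]
Substituting this into (\ref{defafbf}) yields $a_f-\frac34b_f=\frac34 c_f$. Hence
\[
h_1(X_1)=\frac{r_n^4}{4}\Big(-3c_ff^2(X_1)-2f(X_1)f''(X_1)-(f'(X_1))^2\Big)+O(r_n^5),
\]
with the $O(r_n^5)$ uniform in $X_1$. The $O(r_n^3)$ error in $\mu_n$ multiplies an $O(r_n^2)$ bracket, so it contributes only $O(r_n^5)$.

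\textbf{Step 3: the variance (\ref{varlempf1}).} The leading factor $\psi(X_1)=-3c_ff^2-2ff''-f'^2$ is bounded, because $f$ and its derivatives are bounded. Squaring and taking expectations therefore gives $\mathbb{E}[h_1^2]=\frac{r_n^8}{16}\sigma_1^2+O(r_n^9)$.

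\textbf{Step 4: the CLT (\ref{1varlem}).} By the definition of $\mu_n$, $\mathbb{E}[h]=0$, so $\mathbb{E}[h_1(X_1)]=0$. The $h_1(X_i)$ are i.i.d. with variance $s_n^2:=\frac{r_n^8}{16}\sigma_1^2(1+O(r_n))$. Since $|h_1|=O(r_n^4)$ uniformly, $\mathbb{E}|h_1|^3=O(r_n^{12})$. The Lyapunov ratio is then
\[
\frac{n\,O(r_n^{12})}{(ns_n^2)^{3/2}}=O(n^{-1/2})\to0,
\]
using $\sigma_1>0$. So $\sum_ih_1(X_i)/(\sqrt n s_n)\Rightarrow N(0,1)$. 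Because $s_n\big/\big(r_n^4\sigma_1/4\big)\to1$, Slutsky's lemma gives (\ref{1varlem}).

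\textbf{Main obstacle.} The hard part is the bookkeeping in Step 2. The $r_n^2$ terms cancel exactly, so the result depends on combining the $r_n^4$ contributions of all four expansions and of $\mu_n$ correctly. It also depends on the identity $\mathbb{E}[ff'']=-2\mathbb{E}[f'^2]$ to collapse the $\mu_n$-correction into the $c_f$ term. If that identity is not used, the constant in front of $f^2$ will not match $\sigma_1^2$.
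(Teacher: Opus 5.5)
Your proposal is correct and follows the paper's proof essentially step by step. You expand $h_1$ via Proposition \ref{propmain}, you use the periodic integration-by-parts identity $\int_0^1 f^2f''\,dx=-2\int_0^1 f(f')^2\,dx$ to collapse the $\mu_n$-correction into the $c_f$ term (equivalently $3b_f-4a_f=-3c_f$), and you finish with the variance and the Lyapunov CLT. The differences are cosmetic: you linearize $\mu_n$ explicitly, and you check Lyapunov's condition with third moments rather than the paper's fourth moments.
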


{\bf Proof of Lemma \ref{varlem}: } By Proposition \ref{propmain}, the function $h_1(X_1)$ has the following asymptotic expression
\begin{align*}
    h_1(X_1) &=\mathbb{E}[\Delta_{123}|X_1] - \frac{\mu_n}{3}(\mathbb{E}[P_{123}|X_1]+ \mathbb{E}[P_{213}|X_1] + \mathbb{E}[P_{231}|X_1])\\
    &=3r_n^2f^2(X_1)+ \frac{5 r_n^4}{12} \left[(f'(X_1))^2 + 2f(X_1) f''(X_1)\right]\\
    &\quad-\frac{3+r_n^2a_f}{3(4+r_n^2b_f)}\Big(12r_n^2f^2(X_1)+\frac{4r_n^4 }{3} f(X_1) f''(X_1)\\
    &\quad+\frac{r_n^4}{3}\big[8(f^{\prime}(X_1))^2+12f(X_1)f^{\prime\prime}(X_1)\big]\Big) + O(r_n^5)\\
    &=\frac{r_n^4}{4}\Big((3b_f-4a_f)f^2(X_1)-2f(X_1)f^{\prime\prime}(X_1)-(f^{\prime}(X_1))^2\Big)+O(r_n^5).
\end{align*}
Next, we simplify the term $3b_f-4a_f$. By assumption, $f(1)=g(1)=g(0)=f(0)$ and $f'(1)=g'(1)=g'(0)=f'(0)$. Then
\begin{align*}
\int_0^1f^2(x)f''(x)dx=f^2(x)f'(x)|_0^1-2\int_0^1f(x)(f'(x))^2dx=-2\int_0^1f(x)(f'(x))^2dx.
\end{align*}
Recall the definition of $a_f$ and $b_f$ in (\ref{defafbf}). Then
\begin{align*}
&(3b_f-4a_f)\int_0^1f^3(x)dx\\
&= 4\int_0^1f^2(x)f''(x)dx-\frac{5}{3}\int_0^1f(x)(f'(x))^2dx-\frac{10}{3}\int_0^1f^2(x)f''(x)dx\\
&=-3\int_0^1f(x)(f'(x))^2dx.
\end{align*}
Then it follows that
\begin{align}\label{2v1arlem}
    h_1(X_1) &=\frac{r_n^4}{4}\Big(-3c_ff^2(X_1)-2f(X_1)f^{\prime\prime}(X_1)-(f^{\prime}(X_1))^2\Big)+O(r_n^5).
\end{align}
Then we get (\ref{varlempf1}).

Next, we establish (\ref{1varlem}) by the Lyapunov Central Limit Theorem. By the definition of $h_1(X_1)$ in (\ref{h1h2xx}), it follows that $\mathbb{E}[h_1(X_1)]=0$. In view of (\ref{varlempf1}) and (\ref{2v1arlem}), it is clear that
\[s_n^2=\sum_i\mathbb{E}\left[\left(\frac{4h_1(X_i)}{\sqrt{n}r_n^4\sigma_1}\right)^2\right]=1+o(1).\]
Furthermore, according to (\ref{2v1arlem}), one can easily verify that 
   \begin{align*}
       \mathbb{E}\left[\sum_i\left(\frac{4h_1(X_i)}{\sqrt{n}r_n^4\sigma_1}\right)^4\right]=O\left(\frac{nr_n^{16}}{n^2r_n^{16}}\right)=O\left(\frac{1}{n}\right)=o(s_n^4).
   \end{align*}
Then  (\ref{1varlem}) follows from the Lyapunov Central Limit Theorem.

\qed

\begin{Lemma}\label{varlem2}
    Under the assumption of Proposition \ref{propmain}, we have
   \begin{align}\label{m1edeq2}
       \mathbb{E}[h_2^2(X_1,X_2)]= \Theta(r_n^3).
   \end{align} 
Suppose $nr_n=\omega(1)$. If $f(x)$ is the uniform density or $nr_n^5=o(1)$, then we have
      \begin{align}\label{m1edeq1}
       \frac{\sqrt{2}\sum_{i\neq j}h_2(X_i,X_j)}{2n\sigma_{2n}}\Rightarrow N(0,1),
   \end{align} 
   where $\sigma_{2n}^2=\mathbb{E}[h_2^2(X_1,X_2)]$.
\end{Lemma}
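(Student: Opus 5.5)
We handle the two claims of Lemma \ref{varlem2} separately: first the variance order \eqref{m1edeq2}, then the central limit theorem \eqref{m1edeq1}, which we obtain from Lemma \ref{ustclt}.

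\textbf{Variance order.} Write
\[
h_2(X_1,X_2)=\mathbb{E}[\Delta_{123}\mid X_1,X_2]-\frac{\mu_n}{3}\Big(\mathbb{E}[P_{123}\mid X_1,X_2]+\mathbb{E}[P_{213}\mid X_1,X_2]+\mathbb{E}[P_{231}\mid X_1,X_2]\Big).
\]
Every term except $\mathbb{E}[P_{231}\mid X_1,X_2]=\mathbb{E}[A_{13}\mid X_1]\,\mathbb{E}[A_{23}\mid X_2]$ contains the factor $A_{12}$.

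The term $\mathbb{E}[A_{13}\mid X_1]\,\mathbb{E}[A_{23}\mid X_2]=4r_n^2f(X_1)f(X_2)+O(r_n^4)$ does not vanish when $A_{12}=0$. I want to check it against the statement $\sigma_{2n}^2=\Theta(r_n^3)$. Restricting to $A_{12}=0$, this term has size $\Theta(r_n^2)$ with probability $1-O(r_n)$, so its square contributes $\Theta(r_n^4)$. On $A_{12}=1$, with $|X_1-X_2|=t\le r_n$, the terms are:
\begin{itemize}
\item the triangle term $\mathbb{E}[A_{13}A_{23}\mid X_1,X_2]\approx f(X_1)(2r_n-t)$;
\item the two path terms, each $\approx 2r_n f(X_1)$.
\end{itemize}
This gives
\[
h_2\approx f(X_1)\Big[(2r_n-t)-\tfrac14\cdot 4r_n\Big]+O(r_n^2)=f(X_1)(r_n-t)+O(r_n^2).
\]
Hence, by the same Taylor/periodicity computations used in Proposition \ref{propmain},
\[
\mathbb{E}[h_2^2]=\mathbb{E}\Big[A_{12}f^2(X_1)(r_n-|X_1-X_2|)^2\Big]+O(r_n^4)=\frac{2r_n^3}{3}\,\mathbb{E}[f^3(X_1)]+O(r_n^4).
\]
This yields $\Theta(r_n^3)$, since $f$ is bounded away from zero.

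\textbf{CLT via Lemma \ref{ustclt}.} Write $h_2=h_1(X_1)+h_1(X_2)+\tilde h_2(X_1,X_2)$, where $\tilde h_2$ is degenerate, i.e.\ $\mathbb{E}[\tilde h_2(x,X_2)]=0$. Then
\[
\sum_{i\neq j}h_2(X_i,X_j)=2(n-1)\sum_i h_1(X_i)+\sum_{i\ne j}\tilde h_2(X_i,X_j).
\]
By Lemma \ref{varlem}, the linear part has variance $O(n^3r_n^8)$ in general, and it vanishes identically in the uniform case. Indeed, for uniform $f$, $h_1\equiv 0$ exactly: translation invariance makes the conditional expectations constants, so they match their means, and $\mathbb{E}h_1=0$. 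Compare this with $n^2\sigma_{2n}^2=\Theta(n^2r_n^3)$: the ratio is $nr_n^5=o(1)$ in the nonuniform regime. So the linear part is negligible and we may apply Lemma \ref{ustclt} with $k_n=2\tilde h_2$, or directly with $k_n=2h_2$. Here $v_n^2=\Theta(n^2r_n^3)$, since the $n^3\mathbb{E}[q_n^2]$ term is of lower order in both cases. The two conditions of Lemma \ref{ustclt} are then checked as follows.
\begin{itemize}
\item Condition \eqref{ustclt1}: $\sup|k_n|=O(r_n)$, which is $o(nr_n^{3/2})$ iff $nr_n^{1/2}\to\infty$, and this holds since $nr_n=\omega(1)$.
\item Condition \eqref{ustclt2}: $\sup_x\mathbb{E}|k_n(x,X_1)|=O(r_n\cdot r_n)+O(r_n^2)=O(r_n^2)$. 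This must be $o(r_n^{3/2})$, which holds because $r_n=o(1)$.
\end{itemize}
Normalizing, $U_n=\tfrac12\sum_{i\ne j}k_n$ with $v_n^2\sim \tfrac{n^2}{2}\cdot 4\sigma_{2n}^2$ gives exactly $\frac{\sqrt2\sum_{i\ne j}h_2}{2n\sigma_{2n}}\Rightarrow N(0,1)$.

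\textbf{Main obstacle.} The hardest step is the precise second-order expansion in the variance computation. One must show that the cross terms between the $A_{12}=1$ and $A_{12}=0$ pieces and the $\mathbb{E}[P_{231}\mid X_1,X_2]-\mathbb{E}[P]$ fluctuations are truly $O(r_n^4)$, uniformly in $X_1$. One must also check that the positive constant in front of $r_n^3$ is not cancelled. I would do this by conditioning on $t=X_2-X_1$ and expanding the triangle integral exactly as in \eqref{febpfeq1}.
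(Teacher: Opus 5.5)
Your CLT step is the paper's argument. The paper also feeds $h_2$ directly into Lemma~\ref{ustclt}; your $k_n=2h_2$ just moves the factor $2$ into $v_n$. It uses $q_n=h_1$ with $\mathbb{E}[h_1^2]=O(r_n^8)$, so $v_n^2=\Theta(n^2r_n^3)$ when $nr_n^5=o(1)$ or $h_1\equiv 0$. It checks the two conditions with $\sup|h_2|=O(r_n)$ and $\sup_x\mathbb{E}|h_2(x,X_1)|=O(r_n^2)$. That part is fine.

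The variance computation, however, rests on a false identity. $\mathbb{E}[P_{231}\mid X_1,X_2]=\mathbb{E}[A_{13}A_{23}\mid X_1,X_2]$ is \emph{not} $\mathbb{E}[A_{13}\mid X_1]\,\mathbb{E}[A_{23}\mid X_2]$, because both factors depend on the same $X_3$. It is the common-neighbour probability $\int f(x)\,I[d(x,X_1)\le r_n,\ d(x,X_2)\le r_n]\,dx=f(X_1)(2r_n-t)_++O(r_n^2)$, where $t=d(X_1,X_2)$. This term is of order $r_n$ for $t\le 2r_n$ and exactly $0$ for $t>2r_n$.

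As a result, your description of $h_2$ is wrong in every region:
\begin{itemize}
\item For $t\le r_n$ you dropped $-\tfrac{\mu_n}{3}f(X_1)(2r_n-t)$. The correct local form is $h_2=f(X_1)\bigl(\tfrac{r_n}{2}-\tfrac{3t}{4}\bigr)+O(r_n^2)$, not $f(X_1)(r_n-t)$.
\item For $r_n<t\le 2r_n$ one has $h_2=-\tfrac14 f(X_1)(2r_n-t)+O(r_n^2)$. This region contributes at order $r_n^3$, not $r_n^4$.
\item For $t>2r_n$, $h_2=0$ exactly, rather than $-\tfrac{\mu_n}{3}\cdot 4r_n^2f(X_1)f(X_2)$.
\end{itemize}
Carrying out your own integral with the correct $h_2$ gives $\sigma_{2n}^2=\tfrac{r_n^3}{6}\mathbb{E}[f^3(X_1)]+O(r_n^4)$. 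This splits as $\tfrac18$ from $A_{12}=1$ and $\tfrac1{24}$ from $A_{12}=0$, and is a quarter of your $\tfrac23$. Likewise, the fluctuations of $\mathbb{E}[P_{231}\mid X_1,X_2]$ that you planned to show are $O(r_n^4)$ are in fact of leading order.

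The conclusion $\Theta(r_n^3)$ survives the correction, and then has a short proof close to the paper's. The upper bound follows from $|h_2|\le Cr_n$ together with $h_2=0$ off $\{d(X_1,X_2)\le 2r_n\}$. For the lower bound, restrict to $\{1.2r_n<d(X_1,X_2)<1.4r_n\}$, an event of probability at least $0.4\,c\,r_n$ ($c$ the lower bound of $f$). On this event $A_{12}=0$ and $|h_2|=\tfrac{\mu_n}{3}\mathbb{E}[A_{13}A_{23}\mid X_1,X_2]\ge\tfrac{\mu_n}{3}\,0.6\,c\,r_n$.

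This is the region the paper uses for its lower bound. Note, though, that one must square the conditional expectation, not $A_{13}A_{23}$ itself. Also, $X_1$ must not be confined to a window of length $0.1r_n$ as in the paper's event $E$; otherwise the correctly computed bound is only of order $r_n^4$.
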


{\bf Proof of Lemma \ref{varlem2}: } First, we prove (\ref{m1edeq2}).  To begin with, we find a lower bound for $\mathbb{E}[h_2^2(X_1,X_2)]$.
Define event  $E=\{X_1\in(0.5,0.5+0.1r_n), X_2\in(0.5+1.2r_n,0.5+1.4r_n)\}$.  On the event $E$, $A_{12}=0$. Then $\Delta_{123}I[E]=P_{123}I[E]=P_{213}I[E]=0$.
In this case, by the definition of $h(X_1,X_2,X_3)$ in (\ref{defhf}), the function $h_2(X_1,X_2)$ can be expressed as
\begin{eqnarray}\nonumber
h_2(X_1,X_2)&=&\mathbb{E}[h(X_1,X_2,X_3)|X_1,X_2]I[E]+\mathbb{E}[h(X_1,X_2,X_3)|X_1,X_2]I[E^c]\\ \label{sum2p21}
&=&-\frac{\mu_n}{3}A_{13}A_{23}I[E]+\mathbb{E}[h(X_1,X_2,X_3)|X_1,X_2]I[E^c].
\end{eqnarray}
Note that $I[E]I[E^c]=0$. On the event $E$, if $X_3\in(0.5+0.8r_n,0.5+0.9r_n)$, then $A_{13}A_{23}=1$. Therefore, we have
\begin{eqnarray}\nonumber
h_2^2(X_1,X_2)&\geq\frac{\mu_n^2}{9}A_{13}A_{23}I[E]\geq \frac{\mu_n^2}{9}I[E]I[X_3\in(0.5+0.8r_n,0.5+0.9r_n)].
\end{eqnarray}
By assumption, $f(x)$ is bounded away from zero. There exists positive constant $c$ such that $f(x)\geq c>0$. Then
\begin{align}\nonumber
\mathbb{E}\left[h_2^2(X_1,X_2)\right]&\geq \frac{\mu_n^2}{9}\mathbb{E}\big[ I[E]I[X_3\in(0.5+0.8r_n,0.5+0.9r_n)]\big]\\ \nonumber
&\geq c^3\frac{\mu_n^2}{9} \int_{0.5}^{0.5+0.1r_n}dx_1\int_{0.5+1.2r_n}^{0.5+1.4r_n}dx_2\int_{0.5+0.8r_n}^{0.5+0.9r_n}dx_3\\ \label{varexpress3}
&= (0.002)c^3\frac{\mu_n^2}{9} r_n^3.
\end{align}
Recall that $\mu_n=\frac{3}{4}+o(1)$. Then $\mathbb{E}\left[h_2^2(X_1,X_2)\right]\geq Cr_n^3$ for a positive constant $C$ and large $n$.

Next, we find an upper bound  for $\mathbb{E}\left[h_2^2(X_1,X_2)\right]$. By the properties of conditional expectation, we have
\begin{eqnarray}\nonumber
\mathbb{E}\left[h_2^2(X_1,X_2)\right]&=&\mathbb{E}\big[\mathbb{E}[h(X_1,X_2,X_3)|X_1,X_2]\mathbb{E}[h(X_1,X_2,X_4)|X_1,X_2]\big]\\ \nonumber
&=&\mathbb{E}\big[\mathbb{E}[h(X_1,X_2,X_3)h(X_1,X_2,X_4)|X_1,X_2]\big]\\ \label{varexpress4}
&=&\mathbb{E}\big[h(X_1,X_2,X_3)h(X_1,X_2,X_4)\big].
\end{eqnarray}
From the definition of $h(X_1,X_2,X_3)$ in (\ref{defhf}), straightforward calculation yields
\begin{eqnarray}\nonumber
&&h(X_1,X_2,X_3)h(X_1,X_2,X_4)\\ \nonumber
&\leq&A_{12}A_{13}A_{23}A_{14}A_{24}\\ \nonumber
&&+\frac{\mu_n}{3}A_{12}A_{13}A_{23}(A_{12}A_{24}+A_{12}A_{14}+A_{14}A_{42})\\ \nonumber
&&+\frac{\mu_n}{3}A_{12}A_{14}A_{24}(A_{12}A_{23}+A_{12}A_{13}+A_{13}A_{32})\\  \label{ubefq1}
&&+\frac{\mu_n^2}{9}(A_{12}A_{23}+A_{12}A_{13}+A_{13}A_{32})(A_{12}A_{24}+A_{12}A_{14}+A_{14}A_{42}).
\end{eqnarray}
Note that $0\leq A_{ij}\leq1$.
By Proposition \ref{propmain}, we have
\begin{align}\label{ubefq2}
    \mathbb{E}[A_{12}A_{13}A_{23}A_{14}A_{24}]\leq \mathbb{E}[A_{12}A_{13}A_{14}]=\mathbb{E}[(\mathbb{E}[A_{12}|X_1])^3]=O(r_n^3).
\end{align}
According to  Proposition \ref{propmain}, for large $n$, one has $\mu_n\leq 1$. Following a similar argument to that in   (\ref{ubefq2}), it is easy to show that the expectations of the remaining terms in (\ref{ubefq1}) are of order $O(r_n^3)$. In view of (\ref{varexpress4}), we get
\begin{align}\label{ubefq3}
\mathbb{E}\left[h_2^2(X_1,X_2)\right]=\mathbb{E}[h(X_1,X_2,X_3)h(X_1,X_2,X_4)]=O(r_n^3).
\end{align}
Combining (\ref{ubefq3}) and (\ref{varexpress3}) yields   (\ref{m1edeq2})   of Lemma \ref{varlem2}.

We will use Lemma \ref{ustclt} to derive the asymptotic distribution of $\sum_{i\neq j}h_2(X_i,X_j)$. Suppose $nr_n^5=o(1)$. Firstly, we verify condition (\ref{ustclt1}). According to (\ref{ubefq3}) and Lemma \ref{varlem}, $v_n^2=\Theta(n^2r_n^3+n^3r_n^8)$. By Proposition \ref{propmain} and the definition of $h_2(X_1, X_2)$, we have
\begin{align}\nonumber
    |h_2(X_1, X_2)|&=|\mathbb{E}[h(X_1, X_2, X_3)|X_1,X_2]| \\ \nonumber
    &\leq \mathbb{E}[A_{12}A_{23}A_{13}|X_1,X_2] + \frac{\mu_n}{3}\big(\mathbb{E}[A_{12}A_{23}|X_1,X_2] + \mathbb{E}[A_{12}A_{13}|X_1,X_2] \\ \nonumber
    &\quad+ \mathbb{E}[A_{23}A_{13}|X_1,X_2]\big),\\ \nonumber
    &\leq \mathbb{E}[A_{13}|X_1,X_2]+\frac{\mu_n}{3}\big(\mathbb{E}[A_{23}|X_1,X_2] + \mathbb{E}[A_{13}|X_1,X_2] \\ \nonumber
    &\quad+ \mathbb{E}[A_{13}|X_1,X_2]\big),\\ \label{defh22f}
    &=O(r_n),
\end{align}
where $O(r_n)$ does not depend on $X_1,X_2$. Then
\begin{align*}
    \frac{\sup_{x,y}|h_2(x, y)|}{v_n}=O\left(\frac{r_n}{\sqrt{n^2r_n^3+n^3r_n^8}}\right)=O\left(\frac{\sqrt{r_n}}{nr_n\sqrt{1+nr_n^5}}\right)=o(1).
\end{align*}
Then condition (\ref{ustclt1}) holds.

Next, we verify condition (\ref{ustclt2}).
Note that
\begin{align}\nonumber
    |h_2(X_1, X_2)|
    &\leq \mathbb{E}[A_{23}A_{13}|X_1,X_2] + \frac{\mu_n}{3}\big(\mathbb{E}[A_{12}A_{23}|X_1,X_2] + \mathbb{E}[A_{12}A_{13}|X_1,X_2] \\ \nonumber
    &\quad+ \mathbb{E}[A_{23}A_{13}|X_1,X_2]\big).
\end{align}
Then, by (\ref{defh22f}) and Proposition \ref{propmain}, we have
 \begin{align*}\nonumber
    &\mathbb{E}[|h_2(X_1, X_2)|\big|X_1]\\ \nonumber
    &\leq \mathbb{E}[\mathbb{E}[A_{23}A_{13}|X_1,X_2]\big|X_1] + \frac{\mu_n}{3}\big(\mathbb{E}[\mathbb{E}[A_{12}A_{23}|X_1,X_2]\big|X_1] + \mathbb{E}[\mathbb{E}[A_{12}A_{13}|X_1,X_2]\big|X_1] \\ \nonumber
    &\quad+ \mathbb{E}[\mathbb{E}[A_{23}A_{13}|X_1,X_2]\big|X_1]\big)\\ \nonumber
    &=\mathbb{E}[A_{23}A_{13}|X_1] + \frac{\mu_n}{3}\big(\mathbb{E}[A_{12}A_{23}|X_1] + \mathbb{E}[A_{12}A_{13}|X_1] \\ \nonumber
    &\quad+\mathbb{E}[A_{23}A_{13}|X_1]\big)\\
    &=O(r_n^2),
\end{align*}
where $O(r_n^2)$ does not depend on $X_1$.
It then follows that
\begin{align*}
    \frac{n\sup_{x}\mathbb{E}[|h_2(x, X_1)|]}{v_n}=O\left(\frac{nr_n^2}{\sqrt{n^2r_n^3+n^3r_n^8}}\right)=O\left(\frac{\sqrt{r_n}}{\sqrt{1+nr_n^5}}\right)=o(1).
\end{align*}
Then condition (\ref{ustclt2}) holds. Then (\ref{m1edeq1}) follows from Lemma \ref{ustclt}.

When $f(x)$ is the uniform density, it is easy to verify that $h_1(X_1)=0$. In this case, $v_n^2=\frac{n^2}{2}\mathbb{E}[h_2^2(X_1,X_2)]=\Theta(n^2r_n^3)$. The proof for the case $nr_n^5=o(1)$ still works. Then the proof of  Lemma \ref{varlem2} is complete.

\qed

\begin{Lemma}\label{varlem3}
    Under the assumption of Proposition \ref{propmain}, we have
   \begin{align}\label{var1lem3}
       \sigma_{3n}^2=\mathbb{E}[h^2(X_1,X_2,X_3)]= \frac{3r_n^2}{8}\mathbb{E}[f^2(X_1)]+O(r_n^4).
   \end{align} 
   If $nr_n=o(1)$ and $n^3r_n^2=\omega(1)$, then
    \begin{align}\label{var2lem3}
    \frac{\sum_{i\neq j\neq k}h(X_1,X_2,X_3)}{\sqrt{6}n\sqrt{n}\sigma_{3n}}\Rightarrow N(0,1).
   \end{align} 
   where $\sigma_{3n}^2=\mathbb{E}[h^2(X_1,X_2,X_3)]$.
\end{Lemma}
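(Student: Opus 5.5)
The proof has two parts: a direct computation of $\sigma_{3n}^2$, and then the method of moments for the normalized sum.

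\textbf{Variance (\ref{var1lem3}).} Since the $A_{ij}$ are indicators, $h$ takes only three values. If $\{1,2,3\}$ spans a triangle, all three 2-paths $P_{123},P_{213},P_{231}$ equal one, so $h=1-\mu_n$. If exactly two of the three edges are present, exactly one 2-path equals one, so $h=-\mu_n/3$. Otherwise $h=0$. Hence
\[
\mathbb{E}[h^2]=(1-\mu_n)^2\mathbb{E}[\Delta_{123}]+\frac{\mu_n^2}{9}\Big(\mathbb{E}[P_{123}+P_{213}+P_{231}]-3\mathbb{E}[\Delta_{123}]\Big).
\]
I would then plug in Proposition \ref{propmain}:
\[
\mathbb{E}[\Delta_{123}]=3r_n^2\mathbb{E}[f^2(X_1)]+O(r_n^4),\quad \mathbb{E}[P_{123}+P_{213}+P_{231}]=12r_n^2\mathbb{E}[f^2(X_1)]+O(r_n^4),\quad \mu_n=\tfrac34+O(r_n^2).
\]
This gives $\frac{3}{16}\cdot r_n^2\mathbb{E}[f^2]+\frac{1}{16}\cdot 3r_n^2\mathbb{E}[f^2]=\frac{3}{8}r_n^2\mathbb{E}[f^2(X_1)]+O(r_n^4)$, which is (\ref{var1lem3}).

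\textbf{CLT setup (\ref{var2lem3}).} Write $\sum_{i\neq j\neq k}h=6\sum_{i<j<k}h(X_i,X_j,X_k)$, with $\mathbb{E}h=0$. Set $Y_n$ to be this sum divided by $\sqrt{6}n^{3/2}\sigma_{3n}$, and write $\varepsilon_n=nr_n\to0$; note $n\varepsilon_n^2=n^3r_n^2\to\infty$. I would show $\mathbb{E}[Y_n^m]\to (m-1)!!$ for even $m$ and $\to0$ for odd $m$.

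Expand $\mathbb{E}[Y_n^m]$ as a sum over $m$-tuples of triples $(T_1,\dots,T_m)$. Group the triples into connected clusters by vertex overlap; clusters are independent, so expectations factor across them.
\begin{itemize}
\item A cluster that is a single triple contributes zero, because $\mathbb{E}h=0$.
\item A cluster of $q$ triples on $v$ vertices has $|\mathbb{E}\prod h|\le \Pr(\text{the union graph is connected})=O(r_n^{v-1})$. This holds because $h\neq0$ forces each triple to carry at least two edges, and $f$ is bounded above.
\item Such a cluster occurs with at most $O(n^v)$ vertex choices, and the normalization costs $(n^3r_n^2)^{-q/2}$. Its total weight is therefore
\[
O\!\big(n^v r_n^{v-1}(n^3r_n^2)^{-q/2}\big)=O\!\big(\varepsilon_n^{\,v-1-q}\,n^{1-q/2}\big).
\]
\end{itemize}

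\textbf{Case analysis.} For $q=2$ we have $v\in\{3,4,5\}$. The case $v=3$ means the two triples are identical, and its contribution is $\mathbb{E}[h^2]/\sigma_{3n}^2=1$ exactly. The cases $v=4,5$ give $O(\varepsilon_n)$ and $O(\varepsilon_n^2)$, which vanish. For $q\ge3$, the bound is largest at $v=3$ because $\varepsilon_n<1$; there it equals $n(n\varepsilon_n^2)^{-q/2}\varepsilon_n^2=\varepsilon_n^2(n^3r_n^2)^{1-q/2}\to0$.

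So the only surviving configurations partition the $m$ triples into pairs of identical triples. For even $m$ the number of such pairings is $(m-1)!!$, with vanishing corrections from coincidences between different pairs. For odd $m$ no such pairing exists. This gives the Gaussian moments, and hence (\ref{var2lem3}).

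\textbf{Main obstacle.} The delicate step is the connectivity bound $O(r_n^{v-1})$ for a general cluster, uniformly over cluster shapes. I would prove it by choosing a spanning tree of the union graph of edges forced by the nonzero $h$'s, and integrating out the vertices leaf-by-leaf. Each leaf integration costs at most $2r_n\sup f$. The pieces of the $h$'s not used in the tree are bounded by $1$.
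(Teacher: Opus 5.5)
Your proposal is correct and takes essentially the same route as the paper. The variance comes from expanding $h^2$ into $\Delta_{123}$ and 2-path terms and plugging in Proposition~\ref{propmain}; the CLT comes from the method of moments, with vertex-disjoint connected clusters, a spanning-tree bound $O(r_n^{v-1})$, and only pairings of identical triples surviving. One small slip: at $v=3$, $q\ge 3$ the weight is $n(n\varepsilon_n^2)^{-q/2}\varepsilon_n^2=(n^3r_n^2)^{1-q/2}$ with no extra factor $\varepsilon_n^2$, which still tends to zero but precisely because $n^3r_n^2\to\infty$; on the other hand, your explicit treatment of two-triple clusters with $v=4,5$ covers a case that the paper passes over when it asserts that $t=m$ forces identical pairs.
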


{\bf Proof of Lemma \ref{varlem3}:} Firstly, we prove (\ref{var1lem3}). By the definition of $h(X_1,X_2,X_3)$ in (\ref{defhf}), we have
\begin{align*} 
    h^2(X_1, X_2, X_3) &= \Delta_{123} -2\mu_n\Delta_{123}+\frac{\mu_n^2}{9}(P_{123} + P_{213} + P_{231})+\frac{2\mu_n^2}{3}\Delta_{123}.
\end{align*}
In view of Proposition \ref{propmain},  taking the expectation of both sides of the previous equation yields
\begin{align*}
    \mathbb{E}[h^2(X_1,X_2,X_3)]&=\left(1-2\mu_n+\frac{2\mu_n^2}{3}\right) \mathbb{E}[\Delta_{123}]+ \frac{\mu_n^2}{3} \mathbb{E}[P_{123}]\\
    &=\left(1-\frac{3}{2}+\frac{3}{8}\right)3r_n^2\mathbb{E}[f^2(X_1)]+\frac{3}{4}r_n^2\mathbb{E}[f^2(X_1)]+O(r_n^4)\\
    &=\frac{3r_n^2}{8}\mathbb{E}[f^2(X_1)]+O(r_n^4).
\end{align*}
Then (\ref{var1lem3}) holds.

 We will use the method of moments to prove (\ref{var2lem3}). Specifically, we show that the moments of $ \frac{\sum_{i\neq j\neq k}h(X_1,X_2,X_3)}{n\sqrt{6n}\sigma_{3n}}$ converges to the moments of the standard normal distribution.  For convenience, let $J_s=(i_s,j_s,k_s)$, where $i_s,j_s,k_s\in\{1,2,3,\dots,n\}$ are pairwise distinct. Denote $h_{J_s}=h(X_{i_s},X_{j_s},X_{k_s})$.  Let $m$ be a positive integer.

Firstly, we show that the even-order moments converge to those of the standard normal distribution. 
The $2m$-th moments of $ \frac{\sum_{i\neq j\neq k}h(X_1,X_2,X_3)}{n\sqrt{6n}\sigma_{3n}}$ can be expressed as
\begin{align}\label{mmenc}
    \mathbb{E}\left[\left(\frac{\sum_{i\neq j\neq k}h(X_1,X_2,X_3)}{n\sqrt{6n}\sigma_{3n}}\right)^{2m}\right]&=\frac{\sum_{J_1,J_2,\dots,J_{2m}}\mathbb{E}[h_{J_1}h_{J_2}\dots h_{J_{2m}}]}{(6n^3\sigma_{3n}^2)^m}.
\end{align}
Given $s\in\{1,2,3\dots, 2m\}$, if $J_s \cap J_l =\emptyset$ for all $l \in \{1,2,\dots, 2m\}\setminus\{s\}$, then $h_{J_s}$ is independent of $h_{J_l}$ for all $l \in \{1,2,\dots, 2m\}\setminus\{s\}$. Recall that $\mathbb{E}[h_{J_s}]=0$. In this case,
\[\mathbb{E}\left[h_{J_1} h_{J_2} \cdots h_{J_{2m}}\right] = \mathbb{E}[h_{J_s}] \mathbb{E}\left[\prod_{l \in\{1,2,\dots, 2m\}\setminus\{s\}} h_{J_l}\right] = 0.\]
Therefore, $J_s \cap J_{l_0} \neq \emptyset$ for some $l_0 \in \{1,2,\dots, 2m\}\setminus\{s\}$. The expectation in (\ref{mmenc}) is non-zero if and only if every $J_{s}$  has a non-empty intersection with some $J_{t}$ ($s\neq t$). Then, the collection $\{J_1, J_2, \ldots, J_{2m}\}$  can be partitioned into $t$ ($1\leq t\leq m$) disjoint components, where each component consists of triples that share at least one index with at least one other triple in the same component. For each $l\in\{1,2,\dots,t\}$,  let $C_l = \{J_{s_{l_1}}, J_{s_{l_2}}, \ldots, J_{s_{l_{m_l}}}\}$ be the $l$-th connected component, where $s_{lq} \in \{1,2,3\dots, 2m\}$, $1\leq q\leq m_l$ and $2\leq m_l\leq 2m$.

Suppose $t=m$. In this case, each connected component $C_l$ ($1\leq l\leq t$) contains exactly two identical triples. That is,  the collection $\{J_1, J_2, \ldots, J_{2m}\}$ 
 are partitioned into $m$
 disjoint pairs $\{J_s,J_t\}$
 such that $J_s=J_t$. There are $(2m-1)!!$ such partitions. The expectation in (\ref{mmenc}) is identical for each such partition. Without loss of generality, let $J_t=J_{m+t}$ for $1\leq t\leq m$ and $J_{t_1}\cap J_{t_2}=\emptyset$ for distinct $t_1,t_2\in\{1,2,\dots,m\}$. There are 6 possible ways for the set of indices $J_t=\{i_t,j_t,k_t\}$ to equal the set of indices $J_{m+t}=\{i_{m+t},j_{m+t},k_{m+t}\}$. Moreover, $h_{J_1},h_{J_2}, \dots, h_{J_m}$  are independent.  Recall that $\sigma_{3n}^2=\mathbb{E}[h_{J_1}^2]$. Then
\begin{align}\nonumber
    \frac{\sum_{J_1,J_2,\dots,J_{2m}}\mathbb{E}[h_{J_1}h_{J_2}\dots h_{J_{2m}}]}{(6n^3\sigma_{3n}^2)^m}&=(2m-1)!!\frac{6^m\sum_{J_1,J_2,\dots,J_{m}}\mathbb{E}[h_{J_1}^2h_{J_2}^2\dots h_{J_{m}}^2]}{(6n^3\sigma_{3n}^2)^m}\\ \nonumber
    &=(2m-1)!!\frac{6^m\prod_{t=1}^{3m}(n-t+1)}{(6n^3\sigma_{3n}^2)^m}\mathbb{E}[h_{J_1}^2]\mathbb{E}[h_{J_2}^2]\dots \mathbb{E}[h_{J_{m}}^2]\\ \nonumber
    &=(2m-1)!!\frac{n^{3m}+O(n^{3m-1})}{n^{3m}}\\ \label{mmenc1}
    &=(2m-1)!!+o(1).
\end{align}

Suppose $1\leq t\leq m-1$. Since the connected components $C_l$ ($1\leq l\leq t$)  are disjoint, then $\prod_{I_s \in C_1} h_{I_s}$, \dots, $\prod_{I_s \in C_t} h_{I_s}$ are independent. Then
\[\mathbb{E}[h_{J_1}h_{J_2}\ldots h_{J_{2m}}] = \mathbb{E}\left[\prod_{J_s \in C_1} h_{J_s}\right] \dots \mathbb{E}\left[\prod_{J_s \in C_t} h_{J_s}\right].\]
By the definition of $\mu_n$, we have $\frac{\mu_n}{3}\leq1$ for all $n$.
Then the absolute value of each product term  in the preceding equation is bounded by
\begin{align*}
\left|\mathbb{E}\left[\prod_{J_s \in C_l} h_{J_s}\right]\right| &\leq \mathbb{E}\left[\prod_{J_s \in C_l} |h_{J_s}|\right]\leq \mathbb{E}\left[\prod_{J_s \in C_l} (\Delta_{i_s j_s k_s} + P_{i_s j_s k_s} + P_{i_s k_s j_s} + P_{j_s i_s k_s})\right].
\end{align*}
The product $\prod_{J_s \in C_l} (\Delta_{i_s j_s k_s} + P_{i_s j_s k_s} + P_{i_s k_s j_s} + P_{j_s i_s k_s})$ expands into a sum of $4^{m_l}$ terms. Each such term is a product of the form $H_1H_2\dots H_{m_l}$, where $H_s\in\{\Delta_{i_s j_s k_s}, P_{i_s j_s k_s}, P_{j_s i_s k_s}, P_{i_s k_s j_s}\}$ for $1\leq s\leq m_l$. Recall that the triples in $C_l$ are connected; that is, any two triples in $C_l$ share at least one common vertex. Suppose $C_l$ has $v_l$ distinct vertices. There are at most $n^{v_l}$ choices of such vertices. When $H_1H_2\dots H_{m_l}=1$, the union $\bigcup_{l=1}^{m_l} H_l$ forms a connected graph, denoted by $G_l$. Then $G_l$ has $v_l$ vertices. There exists a spanning tree $T_l$ for $G_l$ with exactly $v_l-1$ edges; let $\mathcal{E}(T_l)$ denote its edge set. Since $0\leq A_{ij}\leq 1$, then
\[\mathbb{E}[H_1H_2\dots H_{m_l}]\leq \mathbb{E}\left[\prod_{e\in \mathcal{E}(T_l)}A_e\right]=O(r_n^{v_l-1}),\]
where the last equality is obtained by repeatedly applying (\ref{prope0}) of Proposition \ref{propmain} $v_l-1$ times. Since $m$ is a fixed positive integer, then $4^{m_l}$ and $t$ are fixed constants. Then
\[\left|\mathbb{E}\left[\prod_{J_s \in C_1} h_{J_s}\right] \dots \mathbb{E}\left[\prod_{J_s \in C_t} h_{J_s}\right]\right|=O(r_n^{v_1+v_2+\dots+v_t-t}).\]
Therefore, we have
\begin{align}\label{mmenc2}
    \frac{\sum_{C_1,\dots,C_t}\mathbb{E}\left[\prod_{J_s \in C_1} h_{J_s}\right] \dots \mathbb{E}\left[\prod_{J_s \in C_t} h_{J_s}\right]}{(6n^3\sigma_{3n}^2)^m}&=O\left(\frac{n^{v_1+v_2+\dots+v_t}r_n^{v_1+v_2+\dots+v_t-t}}{(6n^3\sigma_{3n}^2)^m}\right).
\end{align}
Since the triples in  $C_l$ are connected and the three indices within each triple are distinct, it follows that
$3 \leq v_l \leq 3m_l-1$. Then $3t \leq v_1 + \dots + v_t \leq 3(m_1 + \ldots + m_t) - t$. Since $m_1 + m_2 + \ldots + m_t = 2m$, then $3t \leq v_1 + \dots + v_t \leq 6m - t$. Let $w=v_1 + \dots + v_t$. Given that  $nr_n=o(1)$, then
\[\frac{n^{w}r_n^{w-t}}{n^{w+1}r_n^{w+1-t}}=\frac{1}{nr_n}=\omega(1),\]
which implies $n^{w+1}r_n^{w+1-t}=o(n^{w}r_n^{w-t})$. Recalling from (\ref{var1lem3}) that   $\sigma_{3n}^2=\Theta(r_n^2)$, and noting that  $1\leq t\leq m-1$ with $n^3r_n^2=\omega(1)$ by assumption, we obtain
\begin{align}\label{mmenc3}
\frac{n^{v_1+v_2+\dots+v_t}r_n^{v_1+v_2+\dots+v_t-t}}{(n^3\sigma_{3n}^2)^m}=O\left(\frac{n^{3t} r_n^{3t-t}}{n^{3m} r^{2m}_n} \right)= O\left(\frac{(n^3 r_n^2)^t}{(n^3r^2_n)^m}\right) =O\left(\frac{1}{n^3r^2_n}\right)= o(1).
\end{align}

Combining (\ref{mmenc})-(\ref{mmenc3}) yields
\begin{align}\label{mm11enc}
    \mathbb{E}\left[\left(\frac{\sum_{i\neq j\neq k}h(X_1,X_2,X_3)}{n\sqrt{6n}\sigma_{3n}}\right)^{2m}\right]&=(2m-1)!!+o(1).
\end{align}
The $2m$-th moment of $ \frac{\sum_{i\neq j\neq k}h(X_1,X_2,X_3)}{n\sqrt{6n}\sigma_{3n}}$  converges to the $2m$-th moment of the standard normal distribution.

Next, we show the odd-order moments of$ \frac{\sum_{i\neq j\neq k}h(X_1,X_2,X_3)}{n\sqrt{6n}\sigma_{3n}}$ converges to zero. The $(2m+1)$-th moments of $ \frac{\sum_{i\neq j\neq k}h(X_1,X_2,X_3)}{n\sqrt{6n}\sigma_{3n}}$ can be expressed as
\begin{align*}
    \mathbb{E}\left[\left(\frac{\sum_{i\neq j\neq k}h(X_1,X_2,X_3)}{n\sqrt{6n}\sigma_{3n}}\right)^{2m+1}\right]&=\frac{\sum_{J_1,J_2,\dots,J_{2m+1}}\mathbb{E}[h_{J_1}h_{J_2}\dots h_{J_{2m+1}}]}{n\sqrt{6n}\sigma_{3n}(n^3\sigma_{3n}^2)^m}.
\end{align*}
Let $C_l$ ($1\leq l\leq t$) be defined as in the even-order case. In this case, $1\leq t\leq m$. By a similar argument as in (\ref{mmenc2}) and (\ref{mmenc3}), we have
\begin{align*}
    \mathbb{E}\left[\left(\frac{\sum_{i\neq j\neq k}h(X_1,X_2,X_3)}{n\sqrt{6n}\sigma_{3n}}\right)^{2m+1}\right]&= O\left(\frac{(n^3 r_n^2)^t}{n\sqrt{n}r_n(n^3r^2_n)^m}\right) =O\left(\frac{1}{\sqrt{n^3r_n^2} }\right)= o(1).
\end{align*}
Then $(2m+1)$-th moments of $ \frac{\sum_{i\neq j\neq k}h(X_1,X_2,X_3)}{n\sqrt{6n}\sigma_{3n}}$ converges to zero. In view of (\ref{mm11enc}), it follows that the moments of the normalized sum converge to those of the standard normal distribution. Consequently, we conclude that (\ref{var2lem3}) holds, completing the proof.

\qed

\subsection{Proof of Theorem \ref{mthm}}

Recall that $\Delta_{123}=A_{12}A_{23}A_{31}$, $P_{123}=A_{12}A_{23}$, and $\mu_n=\frac{\mathbb{E}[\Delta_{123}]}{\mathbb{E}[P_{123}]}$. Define a U-statistic $U_n$ as follows
\begin{align*}
    U_n = \sum_{i \neq j \neq k} h(X_i, X_j, X_k),
\end{align*}
where the function $h(X_1, X_2, X_3)$ is given in (\ref{defhf}). Then the global clustering coefficient $\mathcal{C}_n$ can be expressed in terms of $U_n$ as follows:
\begin{align}\label{cnasun}
    \left(\sum_{i\neq j\neq k}A_{ij}A_{jk}\right)(\mathcal{C}_n-\mu_n)=U_n.
\end{align}
We will determine the leading-order term of $U_n$ and derive its asymptotic distribution. To this end, we evaluate its variance.

It is straightforward to verify that $\mathbb{E}[h(X_1, X_2, X_3)]=0$. Moreover, if the sets of indices $\{i,j,k\}$ and $\{l,s,t\}$ are disjoint, then  $h(X_i, X_j, X_k)$ and  $h(X_l, X_s, X_t)$ are independent. In this case, we have
\[\mathbb{E}[h(X_i, X_j, X_k) h(X_l, X_s, X_t)]=\mathbb{E}[h(X_i, X_j, X_k)]\mathbb{E}[h(X_l, X_s, X_t)]=0.\]
Then the variance of $U_n$ can be written as
\begin{align*}
\text{Var}(U_n) &= \sum_{\substack{i \neq j \neq k \\ l \neq s \neq t}} \mathbb{E}[h(X_i, X_j, X_k) h(X_l, X_s, X_t)]\\
&= 9 \sum_{\substack{i \neq j \neq k\neq s\neq l}} \mathbb{E}[h(X_i, X_j, X_k) h(X_i, X_l, X_s)] + 18 \sum_{\substack{l\neq i \neq j \neq k }} \mathbb{E}[h(X_i, X_j, X_k) h(X_i, X_j, X_{l})]\\
&\quad + 6 \sum_{i \neq j \neq k} \mathbb{E}[h^2(X_i, X_j, X_k)].
\end{align*}
Note that
\begin{align*}
\mathbb{E}[h(X_i, X_j, X_k) h(X_i, X_l, X_s)]&=\mathbb{E}\left[\mathbb{E}[h(X_i, X_j, X_k)|X_i] \mathbb{E}[h(X_i, X_l, X_{s})|X_i]\right]=\mathbb{E}[h_1^2(X_i)],\\
\mathbb{E}[h(X_i, X_j, X_k) h(X_i, X_j, X_{l})]&=\mathbb{E}\left[\mathbb{E}[h(X_i, X_j, X_k)|X_i,X_j] \mathbb{E}[h(X_i, X_j, X_{l})|X_i,X_j]\right]\\
&=\mathbb{E}[h_2^2(X_i, X_j)].
\end{align*}
Then the variance of $U_n$ takes the form:
\begin{align}\nonumber
\text{Var}(U_n) &=9n^5\mathbb{E}[h_1^2(X_1)] + 18n^4\mathbb{E}[h_2^2(X_1, X_2)]+ 6n^3\mathbb{E}[h^2(X_1, X_2, X_3)]\\ \label{pfunvareq1}
&\quad + O(n^4)\mathbb{E}[h_1^2(X_1)] + O(n^3)\mathbb{E}[h_2^2(X_1, X_2)]+O(n^2)\mathbb{E}[h^2(X_1, X_2, X_3)].
\end{align}

{\bf Proof of Case (I)}. Suppose $nr_n^5 \to \infty$, then $n^4r_n^3=o(n^5 r_n^8)$ and $n^3 r_n^2=o(n^5 r_n^8)$.  By Lemma \ref{varlem}, Lemma \ref{varlem2} and Lemma \ref{varlem3}, the first term of (\ref{pfunvareq1}) is the leading term. We show 
 $U_n$ is asymptotically equal to $3n^2 \sum_{i} h_1(X_i)$. Note that
\begin{align}\nonumber
&\mathbb{E}\left[\left(U_n - 3n^2 \sum_{i} h_1(X_i)\right)^2\right]\\ \label{expuh0}
&= \mathbb{E}[U_n^2] - 6n^2 \mathbb{E}\left[U_n \sum_{i} h_1(X_i)\right] + 9n^4 \mathbb{E}\left[\left(\sum_{i} h_1(X_i)\right)^2\right].
\end{align}
Next, we simplify the expectations in (\ref{expuh0}) to show that their sum is of order $O(n^4 r_n^{8})$.
By (\ref{pfunvareq1}), one has
\begin{align}\label{expuh00}
    \mathbb{E}[U_n^2]=9n^5 \mathbb{E}[h_1^2(X_1)]+O(n^4 r_n^3 + n^3 r_n^2 + n^2 r_n^2).
\end{align}
The second expectation of (\ref{expuh0}) is equal to
\begin{align}\label{expuh1}
\mathbb{E}\left[U_n \sum_{i} h_1(X_i)\right]
&= \sum_{j\neq k\neq l, i}\mathbb{E}\left[h(X_j,X_k,X_l)h_1(X_i)\right].
\end{align}
If $i\not\in\{j,k,l\}$, then $X_i$ is independent of $X_j,X_k,X_l$. In this case,
\[\mathbb{E}\left[h(X_j,X_k,X_l)h_1(X_i)\right]=\mathbb{E}\left[h(X_j,X_k,X_l)\right]\mathbb{E}\left[h_1(X_i)\right]=0.\]
Therefore, we get $i\in\{j,k,l\}$. The expectations in (\ref{expuh1}) are identical for all three cases $i=j$, or $i=k$, or $i=l$. Without loss of generality, let $i=j$. Then
\begin{align*}\nonumber
\mathbb{E}\left[h(X_j,X_k,X_l)h_1(X_i)\right]&= \mathbb{E}\left[\mathbb{E}\left[h(X_j,X_k,X_l)h_1(X_j)|X_j\right]\right]\\ \nonumber
&=\mathbb{E}\left[h_1(X_j)\mathbb{E}\left[h(X_j,X_k,X_l)|X_j\right]\right]\\ 
&=\mathbb{E}\left[h_1^2(X_j)\right].
\end{align*}
It then follows from (\ref{expuh1}) that
\begin{align}\label{expuh3}
\mathbb{E}\left[U_n \sum_{i} h_1(X_i)\right]
&= 3n^3\mathbb{E}\left[h_1^2(X_j)\right]+O(n^2)\mathbb{E}\left[h_1^2(X_j)\right].
\end{align}

Note that $\mathbb{E}\left[h_1(X_i)h_1(X_j)\right]=\mathbb{E}\left[h_1(X_i)\right]\mathbb{E}\left[h_1(X_j)\right]=0$ if $i\neq j$. Then the last expectation of (\ref{expuh0}) is equal to
\begin{align}\label{eoxpuh3}
    \mathbb{E}\left[\left(\sum_{i} h_1(X_i)\right)^2\right]&=\sum_{i,j} \mathbb{E}\left[h_1(X_i)h_1(X_j)\right]=\sum_{i} \mathbb{E}\left[h_1^2(X_i)\right]=n\mathbb{E}\left[h_1^2(X_1)\right].
\end{align}

In view of (\ref{expuh0}), (\ref{expuh00}), (\ref{expuh3}) and (\ref{eoxpuh3}), we have
\begin{align*} 
\mathbb{E}\left[\left(U_n - 3n^2 \sum_{i} h_1(X_i)\right)^2\right]= O(n^4 r_n^{3}).
\end{align*}
Then we conclude that
$U_n =3n^2 \sum_{i} h_1(X_i)+O_P(n^2 r_n\sqrt{r_n})$.
By Lemma \ref{varlem}, one has
\begin{align*}
    \frac{4U_n}{3n^2\sqrt{n}r_n^4\sigma_1}=\frac{ 4\sum_{i} h_1(X_i)}{\sqrt{n}r_n^4\sigma_1}+O_P\left(\frac{1}{\sqrt{nr_n^5}}\right)\Rightarrow N(0,1).
\end{align*}
In view of (\ref{cnasun}) and Lemma \ref{00varlem},  the result in (I) follows.

{\bf Proof of Case (II)}. Suppose $nr_n^5=o(1)$, $nr_n=\omega(1)$ and $f(x)$ \ is not the uniform density. Then $n^3r_n^2=o(n^4 r_n^3 )$ and $n^5r_n^8=o(n^4 r_n^3)$. By Lemma \ref{varlem}, Lemma \ref{varlem2} and Lemma \ref{varlem3}, the second term of (\ref{pfunvareq1}) is the leading term. We will show that $U_n$ is asymptotically equal to $3n \sum_{i \neq j} h_2(X_i,X_j)$. It is easy to verify that
\begin{align}\nonumber
&\mathbb{E}\left[\left(U_n - 3n \sum_{i \neq j} h_2(X_i,X_j)\right)^2\right]\\ \label{sunneq1}
&= \mathbb{E}[U_n^2] - 6n \mathbb{E}\left[U_n \sum_{i \neq j} h_2(X_i, X_j)\right] + 9n^2 \mathbb{E}\left[\left(\sum_{i \neq j} h_2(X_i, X_j)\right)^2\right].
\end{align}
By (\ref{pfunvareq1}), the first expectation of (\ref{sunneq1}) is equal to
\begin{align}\label{sunneq2}
 \mathbb{E}[U_n^2] =18n^4\mathbb{E}[h_2^2(X_1, X_2)]+ O(n^5r_n^8+n^3r_n^2).
\end{align}

The second expectation of (\ref{sunneq1}) is expressed as
\begin{align*}
\mathbb{E}\left[U_n \sum_{i \neq j} h_2(X_i, X_j)\right]=\sum_{i_1\neq j_1\neq k_1, i \neq j} \mathbb{E}\left[h(X_{i_1},X_{j_1},X_{k_1})h_2(X_i, X_j)\right].
\end{align*}
If $\{i_1,j_1, k_1\}\cap\{i,j\}=\emptyset$, then 
\[ \mathbb{E}\left[h(X_{i_1},X_{j_1},X_{k_1})h_2(X_i, X_j)\right]= \mathbb{E}\left[h(X_{i_1},X_{j_1},X_{k_1})\right] \mathbb{E}\left[h_2(X_i, X_j)\right]=0.\]
Suppose $|\{i,j\}\cap\{i_1,j_1,k_1\}|=1$. There are at most $n^4$ such indices. Without loss of generality, let $i=i_1$. Then
\[ \mathbb{E}\left[h(X_{i},X_{j_1},X_{k_1})h_2(X_i, X_j)\right]= \mathbb{E}\left[\mathbb{E}\left[h(X_{i},X_{j_1},X_{k_1})h_2(X_i, X_j)|X_i\right]\right]=\mathbb{E}\left[h_1^2(X_i)\right]=O(r_n^8).\]
Suppose $\{i,j\}\subset\{i_1,j_1,k_1\}$. There are at most $n^3$ such indices. Without loss of generality, let $\{i,j\}=\{i_1,j_1\}$.
\begin{align*} 
\mathbb{E}\left[h(X_{i_1},X_{j_1},X_{k_1})h_2(X_i, X_j)\right]&= \mathbb{E}\left[\mathbb{E}\left[h(X_{i},X_{j},X_{k_1})h_2(X_i, X_j)|X_i,X_j\right]\right]=\mathbb{E}\left[h_2^2(X_i, X_j)\right].
\end{align*}
This result holds for the other cases such as $\{i,j\}=\{i_1,k_1\}$. 
Therefore, we have
\begin{align}\label{sunneq3}
\mathbb{E}\left[U_n \sum_{i \neq j} h_2(X_i, X_j)\right]=6n^3\mathbb{E}\left[h_2^2(X_i, X_j)\right]+O(n^4r_n^8+n^2r_n^3).
\end{align}

The third expectation of (\ref{sunneq1}) is expressed as
\begin{align*} 
\mathbb{E}\left[\left(\sum_{i \neq j} h_2(X_i, X_j)\right)^2\right]&=\sum_{i_1\neq j_1, i \neq j}\mathbb{E}\left[h_2(X_i, X_j)h_2(X_{i_1}, X_{j_1})\right]
\end{align*}
If $\{i,j\}\cap\{i_1,j_1\}=\emptyset$, then $h_2(X_i, X_j)$ and $h_2(X_{i_1}, X_{j_1})$ are independent. In this case,
\[\mathbb{E}\left[h_2(X_i, X_j)h_2(X_{i_1}, X_{j_1})\right]=\mathbb{E}\left[h_2(X_{i_1}, X_{j_1})\right]\mathbb{E}\left[h_2(X_i, X_j)\right]=0\]
Suppose $\{i,j\}=\{i_1,j_1\}$. There are at most $n^2$ such indices, and
there are two ways for the set of indices $\{i,j\}$ to equal $\{i_1,j_1\}$. Then
\[\mathbb{E}\left[h_2(X_i, X_j)h_2(X_{i_1}, X_{j_1})\right]=\mathbb{E}\left[h_2^2(X_i, X_j)\right]=\Theta(r_n^3).\]

Suppose $|\{i,j\}\cap\{i_1,j_1\}|=1$. There are at most $n^3$ such indices. Without loss of generality, let $i=i_1$. In this case, we have
\begin{align*}
    \mathbb{E}\left[h_2(X_i, X_j)h_2(X_{i_1}, X_{j_1})\right]&=\mathbb{E}\left[\mathbb{E}\left[h_2(X_i, X_j)h_2(X_{i}, X_{j_1})|X_i\right]\right]=\mathbb{E}\left[h_1^2(X_1)\right]=O(r_n^8).
\end{align*}
Then
\begin{align} \label{sunneq4}
\mathbb{E}\left[\left(\sum_{i \neq j} h_2(X_i, X_j)\right)^2\right]&=2n^2\mathbb{E}\left[h_2^2(X_1, X_2)\right]+O(n^3r_n^8+n^2r_n^3).
\end{align}

Combining (\ref{sunneq1})-(\ref{sunneq4}) yields
\begin{align*}
\mathbb{E}\left[\left(U_n - 3n \sum_{i \neq j} h_2(X_i,X_j)\right)^2\right]=O(n^5 r_n^8 + n^3 r_n^2 ),
\end{align*}
from which it follows that
\[U_n = 3n \sum_{i \neq j} h_2(X_i,X_j) + O_P(\sqrt{n^5 r_n^8 + n^3 r_n^2}).\]
By Lemma \ref{varlem2}, we have
\begin{align*}
    \frac{\sqrt{2}U_n}{6n^2\sigma_{2n}}= \frac{\sqrt{2}\sum_{i \neq j} h_2(X_i,X_j)}{2n\sigma_{2n}}+O_P\left(\sqrt{nr_n^5+\frac{1}{nr_n}}\right)\Rightarrow N(0,1).
\end{align*}

In view of Lemma \ref{00varlem} and (\ref{cnasun}), we complete the proof for  the case $nr_n^5=o(1)$, $nr_n=\omega(1)$ and $f(x)$ \ is not the uniform density.  When $f(x)$ is the uniform density, $h_1(x)=0$. In this case, the previous proof still works.  Then proof of Case (II) is complete.

{\bf Proof of Case (III)}. The result of Case (III) follows from Lemma \ref{00varlem}, equation (\ref{cnasun}) and Lemma \ref{varlem3}. Then the proof of Theorem \ref{mthm} is complete.

\qed

\section*{Acknowledgement}
Mingao Yuan thanks The University of Texas at El Paso for providing generous startup funds.


\begin{thebibliography}{9}


\bibitem{A17} Abbe, E. (2017).
Community detection and stochastic block models: recent developments. 
\textit{Journal of Machine Learning Research}, \textbf{18}, 1-86.








\bibitem{BB24}
Bangachev,K. and Bresler, G.(2024).
Detection of $L_{\infty}$ geometry in random geometric graphs:suboptimality of triangles and cluster expansion. \textit{Proceedings of Machine Learning Research}, 247:1–71.







    
    \bibitem{BC23}Badiu, M.-A. and Coon, J. P.(2023),
    Structural complexity of one-dimensional random geometric graphs, \textit{IEEE Transactions on Information Theory},  69(2),794-812.
    
    \bibitem{Barthelemy2011} Barth\'elemy, M. (2011). Spatial networks. \textit{Physics Reports}, 499(1--3), 1--101.
 


    





\bibitem{DallChristensen2002} Dall, J. and Christensen, M. (2002). Random geometric graphs. \textit{Physical Review E}, 66(1), 016121.







\bibitem{DC23}
Duchemin, Q., De Castro, Y. (2023). Random geometric graph: some recent developments and perspectives. \textit{High Dimensional Probability IX. Progress in Probability} Birkhäuser, Cham.  

\bibitem{HRP08}
Desmond J. Higham, Marija Rašajski, Nataša Pržulj (2008). Fitting a geometric graph to a protein–protein interaction network, \textit{Bioinformatics}, 24(8), 1093–1099.


\bibitem{ErdosRenyi1959} Erd\H{o}s, P. and R\'enyi, A. (1959). On random graphs I. \textit{Publicationes Mathematicae}, 6, 290--297.





\bibitem{GRK05}
Goel, A., Rai, S., Krishnamachari, B.(2005). Monotone properties of random geometric graphs have sharp thresholds. \textit{Ann. Appl. Probab.} 15 (4) 2535-2552.

\bibitem{G21}
Ganesan, G., Robust paths in random geometric graphs with applications to mobile networks, \textit{2021 International Conference on COMmunication Systems \& NETworkS (COMSNETS)}, Bangalore, India, 2021, pp. 119-123.
    
 
\bibitem{Gilbert1961} Gilbert, E.N. (1961). Random plane networks. \textit{Journal of the Society for Industrial and Applied Mathematics}, 9(4), 533--543.


\bibitem{GMPS23}
Galhotra, S.,   Mazumdar, A., Pal, S., Saha, B.(2023).
Community recovery in the geometric block model, \textit{Journal of Machine Learning Research} 24: 1-53.



\bibitem{Gupta2000} Gupta, P. and Kumar, P.R. (2000). The capacity of wireless networks. \textit{IEEE Transactions on Information Theory}, 46(2), 388--404.

\bibitem{H08} Higham, D.J., Ra\v{s}ajski, M., and Pr\v{z}ulj, N. (2008). Fitting a geometric graph to a protein-protein interaction network. \textit{Bioinformatics}, 24(8), 1093--1099.




\bibitem{HM09}
Han, G. and Makowski, A.(2009). One-dimensional geometric random graphs with
nonvanishing densities—Part I: A strong zero-one
law for connectivity. \textit{IEEE Transactions on information theory}, 55(12),5832-5839.

\bibitem{HM12}
Han, G. and Makowski, A.(2012). One-dimensional geometric random graphs with
nonvanishing densities—Part II: a very strong zero-one
law for connectivity. \textit{Queueing Syst}, 72, 103-138.







\bibitem{JJ86}
Jammalamadaka, S.R. and Janson, S. (1986).
Limit theorems for a triangular scheme of U-statistics with applications to inter-point
distances, \textit{The Annals of Probability}, 14(4),1347-1358.


\bibitem{LCYC14}
Lee, D., etc. (2014), Analysis of clustering coefficients of online social
networks by duplication models, \textit{2014 IEEE International Conference on Communications (ICC)
}, 4095-4100.


\bibitem{N09}
Newman, M.E.J. (2009). Random Graphs with Clustering, \textit{Physical Review Letters}, 103, 058701.




    \bibitem {N03} Newman, M E J (2003),
    The structure and function of complex networks. \textit{SIAM Review} 45, 167-256 .




\bibitem{BM08}
 O'Malley, A. J. and Marsden,P. V. (2008). The analysis of social networks, \textit{Health Serv Outcomes Res Methodol}, {\bf 8}, 222-269.








 \bibitem{PBGKL23}
Paolino, R., Bojchevski, A., Gunnemann, S., Kutyniok, G. and Levie, R.(2023).
Unveiling the sampling density
in non-uniform geometric graphs, \textit{ICLR 2023}

    

\bibitem{Penrose2003} Penrose, M. (2003). \textit{Random Geometric Graphs}. Oxford University Press, Oxford.


\bibitem{RPW05}
Robins, G., P. Pattison, and J. Woolcock (2005). Small and other worlds: Global network
structures from local processes. \textit{American Journal of Sociology} 110 (4), 894–936.





\bibitem{SBL13}
Simpson, S., Bowman F. and Laurienti, P.(2013). Analyzing complex functional brain networks: Fusing statistics and network science to understand the brain,\textit{Statistics Surveys}, 7: 1-36.



\bibitem{WS98}
Watts,D. and Strogatz, S. (1998).
Collective dynamics of ‘small-world’ networks, \textit{Nature}, 393, 440-442. 






\bibitem{Y24}
Yuan, M. (2024). Asymptotic distribution of the friendship paradox of a random geometric graph. \textit{Brazilian Journal of Probability and Statistics},  38, 444-462.


    \bibitem{YF25}
    Yuan, M. and Yu, F. (2025). Hypothesis testing for the dimension of
random geometric graph, preprint, ResearchGate, DOI: 10.13140/RG.2.2.31959.53920

\bibitem{Y25c}
Yuan, M. (2025). Asymptotic distribution of the global
clustering coefficient in a random annulus
graph, https://arxiv.org/pdf/2510.15003


\bibitem{Y25}
Yuan, M. (2025b). Hypothesis testing for the uniformity of random geometric graph, https://arxiv.org/pdf/2510.14210

\bibitem{Y23b}
Yuan, M,(2025).  Limiting distribution for the Randic Index
of a random geometric graph, \textit{MATCH Commun. Math. Comput. Chem.},  93,767-789.


\bibitem{Y26}
Yuan, M. (2026). The weak law of large numbers for the friendship paradox index. https://arxiv.org/pdf/2602.10055




\bibitem{Zheng2025} Zheng, T., Zheng, X., Xue, B., Xiao, S., and Zhang, C. (2025). A network analysis of depressive symptoms and cognitive performance in older adults with multimorbidity: A nationwide population-based study. \textit{Journal of Affective Disorders}, 383, 78--86.



 

\bibitem{Z15}
Zhao,J. (2015). The absence of isolated node in geometric random graphs, \textit{53rd Annual Allerton Conference on Communication, Control, and Computing (Allerton)}, Monticello, IL, USA, 2015, pp. 881-886.


    
 
 


    


\bibitem{ZLKG14}
Zhang, W., Lim, C., Korniss, G. et al. Opinion dynamics and influencing on random geometric graphs. \textit{Sci Rep} 4, 5568 (2014).




\end{thebibliography}
\end{document}